\newtheorem{theorem}{Theorem}
\newtheorem*{claim*}{Claim}
\newtheorem{corollary}[theorem]{Corollary}
\newtheorem{definition}[theorem]{Definition}
\newtheorem{example}[theorem]{Example}
\newtheorem{lemma}[theorem]{Lemma}
\newtheorem{proposition}[theorem]{Proposition}
\newcommand{\R}{\mbox{$\mathbb{R}$}}%
\newcommand{\N}{\mbox{$\mathbb{N}$}}%
\begin{document}
\title{Properties and applications of dual reduction}
\author{Yannick Viossat
\thanks{This is a revised version of the cahier du laboratoire d'\' econom\' etrie de l'Ecole polytechnique 2003-31. The latter was written during my PhD, under the supervision of Sylvain Sorin. I am grateful to him, Bernhard von Stengel, Fran{\c c}oise Forges, Ehud Lehrer and seminar audiences.}}
\date{CEREMADE, Universit\' e Paris-Dauphine,\vspace{0.1cm}\\
 Place du mar\' echal de Lattre de Tassigny, 75016 Paris, France.\vspace{0.3cm}\\
 E-mail: viossat@ceremade.dauphine.fr.}
\maketitle
\abstract{The dual reduction process, introduced by Myerson, allows to reduce a finite game into a smaller dimensional game such that any equilibrium of the reduced game is an equilibrium of the original game. This holds both for Nash equilibrium and correlated equilibrium. We present examples of applications of dual reduction and argue that this is a useful tool to study Nash equilibria and correlated equilibria. We then investigate its properties.\\

Keywords: correlated equilibrium, Nash equilibrium, dual reduction\\

JEL Classification Number:  C72\\}
%
\newpage
\section{Introduction}
Dual reduction is a reduction process for finite games defined by
Roger Myerson (1997), which generalizes elimination of dominated
strategies. It takes its roots in the proofs of existence of
correlated equilibria of Hart and Schmeidler (1989) and Nau and
McCardle (1990).  Dual reduction's main property is that any Nash or
correlated equilibrium of the reduced game is an equilibrium of the
original game. Moreover, by iterative dual reduction, any finite
game may be reduced to an elementary game; that is, to a game in
which all incentive constraints defining correlated equilibria can
be satisfied as strict inequalities in a correlated equilibrium.
Myerson (1997) also showed that, while some games may be reduced in
several ways, this ambiguity is alleviated if we focus on a
specific class of dual reductions, called full dual reductions.

The aim of this article is to show that dual reduction is a powerful
tool to study Nash equilibria and correlated equilibria, and to
study its properties. It is organized as follows: in the next
section, we recall the basics of dual reduction.
Section \ref{sec:app} gives examples of applications of dual
reduction:  we first give an elementary proof of the fact that a
unique correlated  equilibrium is a Nash equilibrium. The proof is
elementary in that it relies entirely on linear programming. We then
show that dual reduction allows to prove the existence of Nash
equilibria with special properties and to study the geometry of the
set of correlated equilibria. In section \ref{sec:prop}, the
properties of dual reduction are investigated. For instance, we show
that rescaling the payoffs does not change the ways in which a game
may be reduced; that in all full dual reductions, all strategies
that have probability zero in all correlated equilibria are
eliminated ; and that any zero-sum game and any game with a unique
correlated equilibrium is reduced by all full dual reductions into a
game with a unique pure strategy profile. We also show that, in
almost all two-player games, the iterative reduction
process is uniquely defined, as long as we focus on full dual
reductions. Finally, a proof is given in the appendix. Unless stated
otherwise, all references to Myerson are to Myerson (1997).\\

\textbf{Notations.} As Myerson, we denote a finite game in strategic form by
$$\Gamma=(N, (C_i)_{i \in N}, (U_i)_{i \in N})$$
where $N$ is the set of players, $C_i$ the set of pure strategies of
player $i$, and $U_i : \times_{j \in N} C_j \to \R$ the utility
function of player $i$. The set of pure strategy profiles is denoted
by $C=\times_{j \in N} C_j$. For each player $i$, we let
$C_{-i}=\times_{j \in N\backslash\{i\}} C_j$. If $c=(c_j)_{j \in N}$
is a pure strategy profile and $d_i$ a pure strategy of player $i$,
we let $(c_{-i},d_i)$ denote the pure strategy profile that differs
from $c$ only in that the $i$-component is $d_i$.

For any finite set $S$, we let $|S|$ denote its cardinal and
$\Delta(S)$ the set of probability distributions over $S$. A
correlated strategy of the players in $N$ is an element of
$\Delta(C)$. Thus, $\mu=(\mu(c))_{c \in C}$ is a correlated strategy
if and only if $\mu(c )\geq 0$ for any $c$ in $C$ and $\sum_{c \in
C} \mu(c)=1$.

Let $\mu$ be a correlated strategy. Assume that before play, a
mediator draws a pure strategy profile $c$ with probability $\mu(c)$
and then privately recommends $c_i$ to player $i$, for every $i$ in
$N$. The correlated strategy $\mu$ is a \emph{correlated equilibrium}
(Aumann, 1974, 1987) if no player has an incentive to deviate
unilaterally from these recommendations. That is, $\mu$ is a
correlated equilibrium if and only if it satisfies the following
incentive constraints:
\begin{equation}
\label{eq:defcor} \sum_{c_{-i} \in C_{-i}} \mu(c)\left[U_i(c_{-i},
d_i)- U_i(c)\right]\leq 0 \quad \quad \forall i \in N, \forall c_i
\in C_i, \forall d_i \in C_i
\end{equation}
For every $i$ in $N$, we let $C_i^c \subset C_i$ denote the set of
pure strategies of player $i$ which have positive marginal
probability in at least one correlated equilibrium.

For any mapping $\alpha_i : C_i \to \Delta(C_i)$ and any $c_i, d_i$
in $C_i$, we may write $\alpha_i \ast c_i$ instead of
$\alpha_i(c_i)$ to denote the image of $c_i$ by this mapping, and
$\alpha_i(d_i|c_i)$ instead of $(\alpha_i \ast c_i) (d_i)$ to denote
the probability of $d_i$ in the mixed strategy $\alpha_i \ast c_i$.
This is for consistency with Myerson.

\section{Basics of dual reduction}
\label{sec:basics}
Unless stated otherwise, all results of this section are due to Myerson.\\

{\bf Dual vectors. } Assume that before play a mediator privately
recommends a pure strategy to each player, who can either obey or
deviate from this recommendation. The behavior of player $i$ can
then be described by a mapping $\alpha_i: C_i \to \Delta(C_i)$,
which associates to every pure strategy $c_i$  the randomized
strategy $\alpha_i \ast c_i$ that she will play if recommended $c_i$.
The mapping $\alpha_i$ may be called a \emph{deviation plan}
for player $i$. If a mediator tries to implement a pure strategy
profile $c$, then player $i$'s gain from deviating unilaterally
according to $\alpha_i$ instead of following the mediator's
recommendation is
\[ D_i(c, \alpha_i):=U_i(c_{-i}, \alpha_i \ast c_i) - U_i(c)\]
Let $\alpha=(\alpha_i)_{i \in N}$ be a profile of deviation plans and
let $D(c, \alpha)$ denote the sum of the above gains over the set of players:
\[D(c, \alpha):=\sum_{i \in N} D_i(c, \alpha_i)= \sum_{i \in N} \left[U_i(c_{-i}, \alpha_i \ast c_i) - U_i(c)\right]
\]

\noindent \emph{Definition. } A \emph{dual vector} is a profile of deviation
plans $\alpha=(\alpha_i)_{i\in N}$ such that $D(c, \alpha) \geq 0$ for every pure strategy profile $c$ in $C$.\\

Dual vectors arise from the linear programming proofs of existence
of correlated equilibria (Hart and Schmeidler, 1989; Nau and
McCardle, 1990). Any game has at least one dual vector. Indeed,
letting $\alpha_i \ast c_i =c_i$ for all $i$ in $N$ and all $c_i$ in
$C_i$ defines a dual vector. We call it the
\emph{trivial dual vector}.\footnote{To see how dual vectors arise from the proofs of
existence of correlated equilibria, consider the auxiliary zero-sum
game in which the maximizer chooses a pure strategy profile $c$ of
$\Gamma$, the minimizer a profile of deviation plans
$\alpha=(\alpha_i)_{i\in N}$, and the payoff is $-D(c,\alpha)$. In
this game (a variant of Hart and Schmeidler's (1989) auxiliary
game), a randomized strategy of the maximizer is a correlated
strategy of $\Gamma$, and it guarantees a payoff of $0$ if and only
if it is a correlated equilibrium. Thus, to prove existence of
correlated equilibria, it suffices to show that the value of the
auxiliary game is at least $0$. In fact, it is exactly $0$ because
the minimizer can guarantee $0$ by choosing the trivial deviation
vector. It follows that dual vectors may be defined as the optimal strategies of
 the minimizer in this auxiliary game. Similarly, Myerson introduces dual
 vectors as the optimal solutions of the dual of a linear program
 whose optimal solutions are the correlated equilibria.}\\

{\bf How to reduce a game using a dual vector. } Let $\alpha$ be a dual vector. The mapping $\alpha_i : C_i \to \Delta(C_i)$  induces  a  Markov chain on $C_i$.
This Markov chain partitions $C_i$ into a set of transient states and disjoint minimal absorbing sets.\footnote{A nonempty subset $B_i$ of $C_i$ is a \emph{minimal absorbing set} if : (i)  for every $c_i$ in $B_i$, $\alpha_i \ast c_i$ has support in $B_i$ and (ii) it contains no nonempty proper subset satisfying (i).} %
Let us say that a mixed strategy $\sigma_i$ of player $i$ is \emph{$\alpha_i$-stationary} if $\alpha_i \ast \sigma_i = \sigma_i$, where
\[\alpha_i \ast \sigma_i= \sum_{c_i \in C_i}  \sigma_i(c_i) (\alpha_i \ast c_i)\]
From the basic theory of Markov chains, it follows that for every minimal absorbing set $B_i \subset C_i$, there is a unique $\alpha_i$-stationary mixed stategy with support in $B_i$. Let $C_i/\alpha_i$ denote the set of such $\alpha_i$-stationary strategies with support in a minimal absorbing set. The $\alpha$-reduced game, denoted by $\Gamma/\alpha$, is the game with the same set of players and the same utility functions as in $\Gamma$, but in which, for every $i$ in $N$, the pure strategy set of player $i$ is $C_i/\alpha_i$:
 \[\Gamma/\alpha=\left\{N, (C_i/\alpha_i)_{i \in N}, (U_i)_{i \in N} \right\}\]
The reduction thus operates by eliminating some strategies and grouping other strategies together.

More precisely, if $c_i \in C_i$ is a transient state for the Markov
chain induced by $\alpha_i$, then $c_i$ is \emph{eliminated}. That
is, $\sigma_i(c_i)=0$ for all $\sigma_i$ in $C_i/\alpha_i$. If $c_i$
is $\alpha_i$-stationary (i.e. $\alpha_i \ast c_i =c_i$), then $c_i$
is \emph{kept} as a pure strategy: $c_i \in C_i/\alpha_i$. Finally,
if $c_i$ is neither transient nor stationary, then $c_i$ is
\emph{grouped} with the other strategies of its minimal absorbing
set, so that: $\exists \sigma_i \in C_i/\alpha_i, \sigma_i \neq c_i
\mbox{ and } \sigma_i(c_i)>0$.\\


\noindent \emph{Definition. } A \emph{dual reduction} of $\Gamma$ is any
$\alpha$-reduced game $\Gamma/\alpha$ where $\alpha$ is a dual
vector. An \emph{iterative dual reduction} of $\Gamma$ is any game
$\Gamma/\alpha_1/\alpha_2/\hdots$ where each $\alpha^k$ is a dual
vector for $\Gamma/\alpha^1/\alpha^2/\hdots/\alpha^{k-1}$. (The expression ``dual reduction" may refer either to a reduced game or to the reduction technique.)\\

Let $C/\alpha=\times_{i \in N} C_i/\alpha_i$ denote the set of pure
strategy profiles of the reduced game $\Gamma/\alpha$. So the set of
correlated strategies of the reduced game is $\Delta(C/\alpha)$.
Any correlated strategy $\mu$ in $C/\alpha$ can be mapped
back to a $\Gamma$-equivalent correlated strategy $\bar{\mu}$ in the
natural way:
\[\bar{\mu}(c)=\sum_{\sigma \in C/\alpha} \mu(\sigma) \sigma(c) \quad \forall c \in C\]
(the mapping $\mu \mapsto \bar{\mu}$ may be shown to be injective). Myerson's main result is that if $\mu$ is a correlated equilibrium of a dual reduction of $\Gamma$, then $\bar{\mu}$ is a correlated equilibrium of $\Gamma$. The same result holds for Nash equilibrium.

The main step of the proof is to show that, if $\sigma$ is a mixed strategy profile such that,
for every $j\neq i$, $\sigma_j$ is $\alpha_j$-stationary,
then $U_i(\sigma) \leq U_i(\sigma_{-i}, \alpha_i \ast \sigma_i)$.
The derivation of this fact will be used in sections \ref{sec:app} and \ref{sec:prop} and so we recall it:  for every $j$ in $N$,
\[\sum_{c \in C}\sigma(c)D_j(c,\alpha_j)=U_j(\sigma_{-j}, \alpha_j \ast \sigma_j) - U_j(\sigma)\]
Indeed, both sides of the equation represent the expected gain for
player $j$ from deviating according to $\alpha_j$, when all other
players obey a mediator who is trying to implement $\sigma$. Summing
over the set of players $N$,  we get
\begin{equation}
\label{eq:rev-variant-lemma1} \sum_{c \in
C}\sigma(c)D(c,\alpha)=\sum_{j\in N} \left[U_j(\sigma_{-j}, \alpha_j
\ast \sigma_j) - U_j(\sigma)\right]
\end{equation}
If $\alpha_j \ast \sigma_j =\sigma_j$ for all $j \neq i$, then (\ref{eq:rev-variant-lemma1}) boils down to %
\begin{equation}
\label{eq2:rev-variant-lemma1} \sum_{c \in
C}\sigma(c)D(c,\alpha)=U_i(\sigma_{-i}, \alpha_i \ast \sigma_i) -
U_i(\sigma)
\end{equation}
Since $\alpha$ is a dual vector, $D(c,\alpha)\geq 0$ for all $c$,
hence the left hand side of (\ref{eq2:rev-variant-lemma1}) is nonnegative. Thus, (\ref{eq2:rev-variant-lemma1}) $\Rightarrow U_i(\sigma) \leq U_i(\sigma_{-i}, \alpha_i \ast \sigma_i)$.\\


{\bf Jeopardization and reduction to elementary games}
A game is \emph{elementary} (Myerson) if it has a correlated equilibrium
$\mu$ which satisfies all incentive constraints with strict
inequality:
\begin{equation}
\label{eq:elem-incentives} \sum_{c \in C} \mu(c)\left[U_i(c_{-i}, d_i) - U_i(c)\right] >0 \quad \forall i \in N, \forall c_i \neq d_i
\end{equation}
If $\mu$ satisfies (\ref{eq:elem-incentives}), then for any
correlated strategy $\mu'$ with full support and for any $\epsilon$
small enough, $(1-\epsilon)\mu + \epsilon \mu'$ is a strict
correlated equilibrium with full support; conversely, any strict
correlated equilibrium with full support satisfies
(\ref{eq:elem-incentives}). So, elementary games can also be defined
as those games having a strict correlated equilibrium with full
support.\footnote{A correlated equilibrium $\mu$ has \emph{full support} if
$\mu(c)>0$ for all $c$ in $C$. It is \emph{strict} if the incentive constraint (\ref{eq:defcor}) is
satisfied with strict inequality, for any $i$ in $N$, any $c_i$ with
positive marginal probability in $\mu$, and any $d_i \neq c_i$.}

Myerson shows that a game may be strictly reduced if and only if it is 
not elementary; this implies that, though many games are not
elementary (e.g., Matching Pennies), any game may be reduced to an
elementary game by iterative dual reduction. The proof is based on
the concept of jeopardization and proposition \ref{prop:M-comp-dv} below: a strong complementary slackness property which we will use extensively.\\

\noindent \emph{Definition.} \quad Let $c_i$ and $d_i$ be pure strategies of player $i$. The strategy $d_i$ \emph{jeopardizes} $c_i$ if,
in every correlated equilibrium $\mu$, the incentive constraint stipulating that player $i$ should not gain by deviating from $c_i$ to $d_i$ is tight. That is,
\[\sum_{c_{-i} \in C_{-i}} \mu(c)\left[U_i(c_{-i}, d_i) - U_i(c)\right] =0\]
%
\begin{proposition}
\label{prop:M-comp-dv}
There exists a dual vector $\alpha$ such that $\alpha_i(d_i|c_i)>0$ if and only if $d_i$ jeopardizes $c_i$.
\end{proposition}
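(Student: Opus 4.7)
My plan is to derive the proposition as a strict complementary slackness statement for the linear program whose feasible set is the set of correlated equilibria. I would proceed in three stages: set up the LP and identify dual vectors with (normalized) optimal dual solutions; apply Goldman--Tucker to obtain a primal--dual pair $(\mu^*, \alpha^*)$ with strict complementarity; and upgrade ``tight at $\mu^*$'' into ``tight at every correlated equilibrium'' via weak complementary slackness.

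First I would consider the LP $(P)$ that maximizes $0$ over $\mu \geq 0$ subject to $\sum_c \mu(c) = 1$ and the incentive constraints (\ref{eq:defcor}). Its dual $(D)$ has a free variable $v$ and nonnegative weights $\beta_i(d_i|c_i)$, with dual constraint $v + D'(c,\beta) \geq 0$ at each profile $c$, where $D'(c,\beta) := \sum_i \sum_{d_i} \beta_i(d_i|c_i)[U_i(c_{-i},d_i) - U_i(c)]$. Feasibility of $(P)$ (every game has a correlated equilibrium) and strong LP duality give a common optimal value of $0$ and $v^* = 0$ at any dual optimum. To convert a dual optimum $\beta^*$ into a dual vector in Myerson's sense, I would use a \emph{uniform} rescaling: for $S \geq \max_{i,c_i} \sum_{d_i} \beta_i^*(d_i|c_i)$, set $\alpha_i(d_i|c_i) := \beta_i^*(d_i|c_i)/S$ for $d_i \neq c_i$ and $\alpha_i(c_i|c_i) := 1 - \sum_{d_i \neq c_i} \beta_i^*(d_i|c_i)/S$; a direct check gives $D(c,\alpha) = D'(c,\beta^*)/S \geq 0$, so $\alpha$ is a dual vector. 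A prior $\varepsilon$-perturbation of the diagonal entries of $\beta^*$ (which leaves dual feasibility intact because $U_i(c_{-i},c_i)-U_i(c) = 0$) ensures $\alpha_i(c_i|c_i) > 0$ for all $i,c_i$, consistent with the trivial fact that $c_i$ jeopardizes itself.

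Next I would invoke the Goldman--Tucker theorem applied to $(P)/(D)$ to produce optimal $\mu^*$ and $\beta^*$ such that, for every triple $(i,c_i,d_i)$, the incentive constraint is tight at $\mu^*$ iff $\beta_i^*(d_i|c_i) > 0$. The associated dual vector $\alpha^*$ inherits this equivalence for $d_i \neq c_i$. To conclude: if $\alpha_i^*(d_i|c_i) > 0$, then for any correlated equilibrium $\mu$---itself an optimal primal solution---weak complementary slackness applied to $(\mu,\beta^*)$ forces the $(i,c_i,d_i)$-constraint to be tight at $\mu$, so $d_i$ jeopardizes $c_i$; conversely, if $d_i$ jeopardizes $c_i$, the constraint is tight at $\mu^*$ in particular and strict complementarity yields $\alpha_i^*(d_i|c_i) > 0$. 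The main obstacle will be this translation from LP dual weights to Myerson's stochastic dual vectors---uniform rescaling by a single $S$ is essential, since row-by-row normalization would in general destroy $D(c,\alpha) \geq 0$.
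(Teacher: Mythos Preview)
The paper does not supply its own proof of this proposition: it is stated in Section~\ref{sec:basics}, attributed to Myerson (1997), and explicitly described there as ``a strong complementary slackness property''. Your plan is exactly the argument the paper is alluding to, and it is correct. The LP set-up, the identification of optimal dual solutions with (rescaled) dual vectors, the Goldman--Tucker strict complementarity step, and the upgrade from ``tight at $\mu^*$'' to ``tight at every correlated equilibrium'' via ordinary complementary slackness (since every correlated equilibrium is primal optimal) are all sound. Your observation that a \emph{uniform} scalar $S$ is required---so that $D(c,\alpha)=D'(c,\beta^*)/S$---is the one genuinely delicate point, and you handle it correctly; row-wise normalization would indeed destroy the inequality. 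The treatment of the diagonal case $d_i=c_i$ is also fine: the corresponding incentive constraint is identically zero, so $c_i$ always jeopardizes itself, and either taking $S$ strictly larger than the maximum off-diagonal row sum or your $\varepsilon$-perturbation forces $\alpha_i(c_i|c_i)>0$.
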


{\bf Full dual reduction}  
Let us say that a dual vector is \emph{full} (or
has full support) if it is positive in every component that is
positive in at least one dual vector. By proposition
\ref{prop:M-comp-dv}, for any full dual vector $\alpha$ and any pure
strategies $c_i$ and $d_i$ in $C_i$, $\alpha_i(d_i|c_i)>0$ if and
only if $d_i$ jeopardizes $c_i$. Existence of full dual vectors
follows from the convexity of the set of dual vectors. A \emph{full
dual reduction} of $\Gamma$ is any reduced game $\Gamma/\alpha$
where $\alpha$ is a full dual vector.

The interest of full dual reductions is not only that there are less
full dual reductions than dual reductions. It also lies in the
following properties, which remain implicit in Myerson: any full
dual vectors, having the same positive components, induce the same
minimal absorbing sets. Therefore, in any full dual reduction, the
pure strategies which are eliminated (resp. kept as pure strategies,
grouped together) are the same. Moreover, in any full dual
reduction, there are weakly less pure strategies remaining than in
any other dual reduction of the same game. That is, if $\alpha$ is a
full dual vector and $\beta$ is any dual vector, then
$|C_i/\alpha_i| \leq |C_i/\beta_i|$. This follows from the fact that
any component that is positive in $\beta_i$ is also positive in
$\alpha_i$, and from basic properties of Markov chains.
%
%

\section{Some applications of dual reduction}
\label{sec:app}
This section aims at showing, by way of examples, that dual reduction is a useful tool to study properties of Nash equilibria and correlated equilibria.\\

\noindent \textbf{A unique correlated equilibrium is a Nash
equilibrium: an elementary proof}  Consider first the fact that, if
a game has a unique correlated equilibrium, then this correlated
equilibrium is a Nash equilibrium. Of course, as existence of
correlated equilibria, this follows from the existence of Nash
equilibria and the fact that any Nash equilibrium is a correlated
equilibrium. But, as for existence of correlated equilibria (Hart
and Schmeidler, 1989; Nau and McCardle, 1990), it would be nice to
find a more direct proof, relying solely on linear programming. Dual
reduction provides such a proof.

Indeed, let $\Gamma$ be a game with a unique correlated equilibrium.
By iterative dual reduction, $\Gamma$ may be reduced to an
elementary game $\Gamma^{e}$. Since $\Gamma^{e}$ is elementary (i.e.
has a strict correlated equilibrium with full support), it follows
that either $\Gamma^{e}$ has an infinity of correlated equilibria,
or $\Gamma^{e}$ has a unique strategy profile. Since $\Gamma$ has a
unique correlated equilibrium and since different correlated
equilibria of $\Gamma^{e}$ induce different correlated equilibria of
$\Gamma$, the first case is ruled out. Therefore, $\Gamma^{e}$ has a
unique strategy profile, hence trivially a Nash equilibrium. This
implies that $\Gamma$ has a Nash equilibrium hence that the unique
correlated equilibrium of $\Gamma$ is a Nash equilibrium.

This proof relies on: a) the definition of dual reduction, which
requires the Minimax theorem and existence of stationary
distributions for finite Markov chains; and b) the fact that any
game may be reduced to an elementary game, which Myerson proved
through the strong complementary property of linear programs. Since
the existence of stationary distributions for finite Markov chains
can be deduced from the Minimax theorem, the above proof relies
solely on linear duality. In particular no fixed point theorem is
used.\footnote{The fact that the existence of stationary
distributions for finite Markov chains can be deduced from the
Minimax theorem is mentioned by Mertens et al (1994, ex. 9, p.41).
To see this, let $M=(m_{ij})$ denote a stochastic matrix (that is,
the $m_{ij}$ are nonnegative and each columns sums to unity).
Applying the lemma of Hart and Schmeidler (1989, p.19) with
$a_{jk}=m_{kj}$ and $u$ a basis vector, we get that there exists a
probability vector $x$ such that $Mx=x$. Since Hart and Schmeidler
prove this lemma via the Minimax theorem,
this shows that existence of stationary distributions for finite Markov chains can indeed be deduced from the Minimax theorem.}\\

\noindent {\bf Equilibria with special properties. } Dual reduction also allows us to prove existence of equilibria with special properties. For instance:
\begin{proposition}%
\label{prop:applqs}%
In any finite game, there exists a Nash equilibrium $\sigma$ such
that, for every player $i$ and every pure strategy $c_i$ with
marginal probability zero in all correlated equilibria, $c_i$ is not
a best response to $\sigma_{-i}$.
\end{proposition}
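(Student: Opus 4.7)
The plan is to build $\sigma$ as the lift of a Nash equilibrium of an iterated full dual reduction whose first step uses the specific dual vector supplied by Proposition \ref{prop:M-comp-dv}, and then to exploit strong complementary slackness of the correlated-equilibrium LP to force a strict inequality on $c_i$.

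First, pick a dual vector $\alpha$ of $\Gamma$ with $\alpha_i(d_i\mid c_i)>0$ iff $d_i$ jeopardizes $c_i$, as given by Proposition \ref{prop:M-comp-dv}. Reduce $\Gamma$ by $\alpha$ and then iterate with further full dual reductions until an elementary game $\Gamma^e$ is reached. By Nash's theorem $\Gamma^e$ admits a Nash equilibrium $\sigma^e$, which Myerson's main lifting result converts into a Nash equilibrium $\sigma$ of $\Gamma$. Each marginal $\sigma_j$ is a convex combination of elements of $C_j/\alpha_j$ (mixed strategies of $\Gamma$ that are $\alpha_j$-stationary), hence $\sigma_j$ is itself $\alpha_j$-stationary.

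Next, fix $i$ and $c_i\notin C_i^c$, and apply equation (\ref{eq2:rev-variant-lemma1}) to the mixed strategy profile $\tau$ with $\tau_i=c_i$ and $\tau_j=\sigma_j$ for $j\neq i$. Because $\alpha_j\ast\sigma_j=\sigma_j$ for every $j\neq i$, the equation specializes to
\[
U_i(\sigma_{-i},\alpha_i\ast c_i)-U_i(\sigma_{-i},c_i)=\sum_{c_{-i}\in C_{-i}}\sigma_{-i}(c_{-i})\,D\bigl((c_{-i},c_i),\alpha\bigr).
\]
The crucial claim is that every summand $D((c_{-i},c_i),\alpha)$ is strictly positive. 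This is the primal-side companion of Proposition \ref{prop:M-comp-dv}: strong complementary slackness for the CE linear program produces a maximal-support correlated equilibrium $\mu^*$ paired with $\alpha$ in strict complementarity, so $\mu^*(c)>0\Leftrightarrow D(c,\alpha)=0$. Since $c_i\notin C_i^c$ forces $\mu(c_{-i},c_i)=0$ for every correlated equilibrium $\mu$ and every $c_{-i}$, each profile $(c_{-i},c_i)$ lies outside the support of $\mu^*$, and hence $D((c_{-i},c_i),\alpha)>0$.

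Consequently $U_i(\sigma_{-i},\alpha_i\ast c_i)>U_i(\sigma_{-i},c_i)$, and averaging forces some $d_i$ with $\alpha_i(d_i\mid c_i)>0$ to satisfy $U_i(\sigma_{-i},d_i)>U_i(\sigma_{-i},c_i)$; therefore $c_i$ is not a best response to $\sigma_{-i}$. The delicate step is the strict complementarity assertion $D(c,\alpha)>0$ for $c$ outside every CE support; it is the primal-side counterpart of Proposition \ref{prop:M-comp-dv} and the only LP input not already recorded in the excerpt.
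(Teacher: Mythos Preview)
Your argument is essentially the paper's: take a dual vector $\alpha$ with $D(c,\alpha)>0$ for every $c$ outside the support of all correlated equilibria (the paper calls such an $\alpha$ \emph{strong}), pick a Nash equilibrium $\sigma$ of the reduced game, and combine the $\alpha_j$-stationarity of each $\sigma_j$ with equation~(\ref{eq2:rev-variant-lemma1}) to obtain $U_i(\sigma_{-i},\alpha_i\ast c_i)>U_i(\sigma_{-i},c_i)$ whenever $c_i\notin C_i^c$.

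Two comments. First, the iteration down to an elementary game is superfluous: a single reduction $\Gamma/\alpha$ already makes every $\sigma_j$ $\alpha_j$-stationary, which is all you ever use. Second, there is a small logical wrinkle in your choice of $\alpha$. You first invoke Proposition~\ref{prop:M-comp-dv}, which only guarantees \emph{fullness} ($\alpha_i(d_i\mid c_i)>0$ iff $d_i$ jeopardizes $c_i$); then you later assert that this very $\alpha$ sits in strict complementarity with some maximal-support $\mu^*$, hence is strong. But fullness does not imply strongness: the paper notes that a dual vector is both full and strong precisely when it lies in the relative interior of the set of dual vectors, so a full dual vector on the relative boundary would fail your ``crucial claim''. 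The fix is immediate --- take $\alpha$ directly from a Goldman--Tucker strictly complementary pair (this is exactly the ``primal-side companion'' you invoke), which then yields both properties at once. The paper sidesteps the issue by defining strong dual vectors explicitly, citing Nau--McCardle for existence, and observing that fullness plays no role in this particular proposition.
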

We first need to introduce a new class of dual vectors.\\

\noindent \emph{Definition. } A dual vector is \emph{strong} if for
every pure strategy profile $c$ in $C$ with probability zero in all
correlated equilibria,
$$D(c,\alpha)=\sum_{i \in N} \left[U_i(c_{-i}, \alpha_i \ast c_i) - U_i(c)\right]>0$$

\noindent (the first equality simply recalls the definition of $D(c,\alpha)$). Existence of strong dual vectors follows from Nau and McCardle's (1990) proof of existence of correlated equilibria. Due to the linearity of the conditions defining dual vectors and their refinements, any strictly convex combination of a full dual vector and of a strong dual vector is a dual vector which is both strong and full. This implies that there is at least one strong and full dual vector. It is actually easy to show that a dual vector is both strong and full if and only if it belongs to the relative interior of the (convex) set of dual vectors.

The following lemma is a variant of Myerson's lemma 1:
\begin{lemma}
\label{app-lm:sdv}
Let $i \in N$. Let $\alpha$ be a strong dual vector. Let $\sigma_{-i} \in \times_{j \neq i} \Delta(C_j)$. Assume that for every $j \neq i$, the mixed strategy $\sigma_j$ is $\alpha_j$-stationary. For any pure strategy $c_i$ of player $i$ with marginal probability zero in all correlated equilibria, $U_i(\sigma_{-i},c_i)<U_i(\sigma_{-i}, \alpha_i \ast c_{i})$.
\end{lemma}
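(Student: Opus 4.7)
The plan is to mimic the derivation of equation~(\ref{eq2:rev-variant-lemma1}) but applied to the specific mixed profile where player $i$ plays the pure strategy $c_i$, and then use the strictness afforded by the definition of a strong dual vector.

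Concretely, I would set $\sigma := (\sigma_{-i}, c_i)$, viewing $c_i$ as a degenerate mixed strategy for player $i$. Summing the identity $\sum_{c \in C} \sigma(c) D_j(c,\alpha_j) = U_j(\sigma_{-j}, \alpha_j \ast \sigma_j) - U_j(\sigma)$ over $j \in N$ (exactly as in the derivation of (\ref{eq:rev-variant-lemma1})), and using that $\sigma_j$ is $\alpha_j$-stationary for every $j \neq i$, all summands with $j \neq i$ vanish. Since $\alpha_i \ast \sigma_i = \alpha_i \ast c_i$, the $j=i$ term gives
\[
\sum_{c \in C} \sigma(c)\, D(c,\alpha) \;=\; U_i(\sigma_{-i}, \alpha_i \ast c_i) - U_i(\sigma_{-i}, c_i).
\]
So it suffices to show the left-hand side is strictly positive.

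The support of $\sigma$ consists of the pure profiles $(c_{-i}, c_i)$ with $c_{-i}$ in the support of $\sigma_{-i}$; in particular the support is nonempty. Every such profile contains $c_i$ as the $i$-coordinate, so its marginal on player $i$'s strategy $c_i$ must be zero in any correlated equilibrium (by the hypothesis on $c_i$), which forces each such pure profile itself to have probability zero in every correlated equilibrium. By the definition of a strong dual vector, $D(c,\alpha) > 0$ for every such profile in the support of $\sigma$. Since $\sigma(c) > 0$ on this nonempty support, the weighted sum is strictly positive, yielding $U_i(\sigma_{-i}, c_i) < U_i(\sigma_{-i}, \alpha_i \ast c_i)$.

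No single step looks like a genuine obstacle: the only thing that needs attention is the logical passage from ``$c_i$ has zero marginal in every correlated equilibrium'' to ``every pure profile containing $c_i$ has probability zero in every correlated equilibrium,'' which is immediate from the definition of marginal probability. The rest is a direct specialization of the argument already worked out in equations~(\ref{eq:rev-variant-lemma1})--(\ref{eq2:rev-variant-lemma1}), with the single new input being the strict inequality $D(c,\alpha) > 0$ guaranteed on the relevant profiles by strongness of $\alpha$.
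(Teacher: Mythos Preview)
Your proof is correct and follows essentially the same argument as the paper: set $\sigma=(\sigma_{-i},c_i)$, invoke equation~(\ref{eq2:rev-variant-lemma1}) via the $\alpha_j$-stationarity of $\sigma_j$ for $j\neq i$, and then use strongness of $\alpha$ on the profiles in the support of $\sigma$ (all of which have probability zero in every correlated equilibrium because they contain $c_i$) to get strict positivity of the left-hand side. The only difference is cosmetic ordering and your explicit remark that the support of $\sigma$ is nonempty.
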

\begin{proof}
Let $\sigma=(\sigma_{-i},c_i)$.
Since $c_i$ has marginal probability zero in all correlated equilibria, it follows that every strategy profile $c$ with $\sigma(c)>0$ has probability zero in all correlated equilibria.
By definition of strong dual vectors, this implies that $D(c,\alpha)>0$. Therefore
\[\sum_{c \in C}\sigma(c)D(c,\alpha)>0\]
But since for every $j\neq i$, $\sigma_j$ is $\alpha_j$-stationary,
it follows that equation (\ref{eq2:rev-variant-lemma1}) is
satisfied. Therefore
\[0< \sum_{c \in C}\sigma(c)D(c,\alpha)=U_i(\sigma_{-i}, \alpha_i \ast \sigma_i) - U_i(\sigma)=U_i(\sigma_{-i}, \alpha_i \ast c_i) - U_i(\sigma_{-i}, c_i) \]
(for the last equality, recall that $\sigma_i=c_i$). The result follows.
\end{proof}
We now prove proposition \ref{prop:applqs}: let $\alpha$ be a strong dual vector and $\sigma$ a Nash equilibrium of $\Gamma/\alpha$, hence of $\Gamma$. By construction of $\Gamma/\alpha$, for every $j$ in $N$, $\sigma_j$ is $\alpha_j$-stationary. Therefore, by lemma  \ref{app-lm:sdv}, if $c_i$ has marginal probability zero in all correlated equililibria, then $c_i$ is not a best-response to $\sigma_{-i}$. So $\sigma$ satisfies the desired property. This completes the proof.

A corollary of proposition \ref{prop:applqs} is that, if a game has a unique correlated equilibrium, then this correlated equilibrium is a quasi-strict Nash equilibrium. This can also be shown directly as in
(Viossat, 2007).\\

\noindent \textbf{Tight games} Consider the class of tight games, introduced by Nitzan (2005):
\begin{definition}
A game is \emph{tight} if in every correlated
equilibrium $\mu$, all incentives constraints are tight:
\begin{equation}
\label{eq:deftight} \sum_{c_{-i} \in C_{-i}} \mu (c) [U_i(c_{-i},
d_i)-U_i(c)] = 0 \hspace{5 mm} \forall i \in N, \forall c_i \in C_i,
\forall d_i \in C_i
\end{equation}
\end{definition}
Nitzan (2005) shows that, for any positive integer $n$, there is an open set of tight games within the set of $n \times n$ bimatrix games.
\begin{proposition}
Any tight game has a completely mixed Nash equilibrium
\end{proposition}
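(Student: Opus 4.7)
The plan is to build a dual vector in which every transition probability $\alpha_i(d_i\mid c_i)$ is strictly positive, and then apply Myerson's reduction machinery. The key observation is that in a tight game, tightness of \emph{all} incentive constraints in \emph{every} correlated equilibrium means that, in the sense of the definition just above Proposition \ref{prop:M-comp-dv}, every pure strategy $d_i$ jeopardizes every pure strategy $c_i$ of player $i$. Hence by Proposition \ref{prop:M-comp-dv}, there exists a dual vector $\alpha$ such that $\alpha_i(d_i\mid c_i)>0$ for all $i\in N$ and all $c_i,d_i\in C_i$.

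Next, I would analyze the Markov chain on $C_i$ induced by such an $\alpha_i$. Because every transition probability is strictly positive, the chain is irreducible (in fact, every state is reachable in one step from every other). Therefore $C_i$ itself is the unique minimal absorbing set, and the unique $\alpha_i$-stationary distribution $\sigma_i^*$ has full support on $C_i$. Consequently $C_i/\alpha_i=\{\sigma_i^*\}$ is a singleton, so the reduced game $\Gamma/\alpha$ has a unique pure strategy profile $\sigma^* = (\sigma_i^*)_{i\in N}$, which is trivially a Nash equilibrium of $\Gamma/\alpha$.

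Finally, invoking Myerson's main result recalled in Section \ref{sec:basics} — any Nash equilibrium of a dual reduction lifts to a Nash equilibrium of the original game — I would conclude that $\bar\sigma^*$ is a Nash equilibrium of $\Gamma$. Since each $\sigma_i^*$ has full support on $C_i$, this Nash equilibrium is completely mixed, which is the conclusion of the proposition.

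I do not expect any serious obstacle here: the only nontrivial input is Proposition \ref{prop:M-comp-dv}, which produces the dual vector with the right support pattern, and the rest is a direct application of the definition of the $\alpha$-reduced game combined with the irreducibility of the induced Markov chain.
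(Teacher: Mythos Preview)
Your proof is correct and follows essentially the same route as the paper. The paper phrases it in terms of a \emph{full} dual vector rather than citing Proposition~\ref{prop:M-comp-dv} directly, but since full dual vectors are exactly those satisfying $\alpha_i(d_i\mid c_i)>0$ iff $d_i$ jeopardizes $c_i$, the two presentations coincide.
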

\begin{proof}
The definition of tight games may be rephrased as follows: a game is tight if $d_i$ jeopardizes $c_i$, for every player $i$ and every couple of pure strategies $(c_i,d_i)$ in $C_i \times C_i$.
Therefore, if $\alpha$ is a full dual vector of a tight game $\Gamma$, then for every couple of pure strategies $(c_i,d_i)$ in $C_i \times C_i$,
we have $\alpha_i(d_i|c_i)>0$. Thus, for the Markov chain induced by $\alpha_i$, there is a unique minimal absorbing set: the whole of $C_i$.
Therefore, in the full dual reduction $\Gamma/\alpha$, all strategies of player $i$ are grouped together, and this for every $i$ in $N$.
It follows that $\Gamma/\alpha$ has a unique strategy profile $\sigma$, which is a completely mixed strategy profile of $\Gamma$; furthermore,
since $\sigma$ is trivially a Nash equilibrium of $\Gamma/\alpha$, it is a Nash equilibrium of $\Gamma$.\end{proof}

\noindent \textbf{A class of games introduced by Nau et al (2004). }
Recall that $C_i^c$ denotes the set of pure strategies of player $i$
with positive marginal probability in at least one correlated
equilibrium. Let us say that a game is \emph{pretight} if, in every
correlated equilibrium $\mu$, every incentive constraint
(\ref{eq:defcor}) with $c_i$ and $d_i$ in $C_i^c$ is tight. That is,
\begin{equation}
\sum_{c_{-i} \in C_{-i}} \mu (c) [U_i(c_{-i}, d_i)-U_i(c)] = 0 \quad
\forall i \in N, \forall c_i \in C_i^c, \forall d_i \in C_i^c
\end{equation}
This condition, which is weaker than (\ref{eq:deftight}), has been
introduced by Nau et al (2004). Recall that the inequalities
defining the set of correlated equilibria are linear in $\mu$, so
that the set of correlated equilibria is a convex polytope. Nau et
al (2004, proposition 2) showed that in any finite
game, all Nash equilibria belong to the relative boundary of this
polytope unless: (a) the game is pretight, and (b) there exists a
Nash equilibrium with support $\times_{i \in N} C_i^c$. Dual
reduction allows to show that (a) implies (b).
\begin{proposition}
\label{prop:pretight}
Any pretight game has a quasi-strict Nash equilibrium with support $\times_{i \in N} C_i^c$.
\end{proposition}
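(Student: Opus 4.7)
The plan is to exhibit a Nash equilibrium with the required support via a full dual reduction, and to extract quasi-strictness from Lemma \ref{app-lm:sdv}. I would first pick a dual vector $\alpha$ that is both full and strong (which exists because any strict convex combination of a full and of a strong dual vector has both properties) and consider the reduction $\Gamma/\alpha$. By pretightness, every $d_i \in C_i^c$ jeopardizes every $c_i \in C_i^c$; Proposition \ref{prop:M-comp-dv} together with fullness then give $\alpha_i(d_i|c_i) > 0$ for all $c_i, d_i \in C_i^c$, so $C_i^c$ is strongly connected under the Markov chain $\alpha_i$ and is contained in a unique minimal absorbing set $B_i$.

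The next step, which is the main obstacle, is to show the reverse inclusion $B_i \subseteq C_i^c$. This can be deduced from the fact, announced in the introduction and established later in the paper, that in every full dual reduction all strategies with marginal probability zero in all correlated equilibria are eliminated. If one prefers not to invoke that result, one can argue directly: by Nash's theorem, $\Gamma/\alpha$ has a Nash equilibrium $\tau$, and Myerson's result identifies $\bar{\tau}$ as a Nash equilibrium, hence a correlated equilibrium, of $\Gamma$. Consequently $\bar{\tau}$ assigns marginal probability zero to every strategy outside $C_i^c$, which forbids $\tau_i$ from placing positive weight on any $\alpha_i$-stationary distribution whose support meets $C_i \setminus C_i^c$. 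If $B_i$ contained some $d_i \notin C_i^c$, then the stationary distribution on $B_i$ could not be used; and stationary distributions on other minimal absorbing sets $B_i'$ could not either, since any such $B_i'$ would be disjoint from $B_i \supseteq C_i^c$ and therefore contained in $C_i \setminus C_i^c$. This would leave $\tau_i$ with no available pure strategy of $\Gamma/\alpha$, a contradiction.

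With $B_i = C_i^c$ established, the unique $\alpha_i$-stationary distribution $\sigma_i$ on $C_i^c$ has full support on $C_i^c$ and is a pure strategy of player $i$ in $\Gamma/\alpha$; the same contradiction argument shows that any Nash equilibrium of $\Gamma/\alpha$ must play $\sigma_i$ with probability one, so $\sigma = (\sigma_i)_{i \in N}$ is a Nash equilibrium of $\Gamma/\alpha$, which by Myerson's result pulls back to a Nash equilibrium of $\Gamma$ with support $\times_{i \in N} C_i^c$. For quasi-strictness, since each $\sigma_j$ is $\alpha_j$-stationary and $\alpha$ is strong, Lemma \ref{app-lm:sdv} yields, for every $c_i \notin C_i^c$, $U_i(\sigma_{-i}, c_i) < U_i(\sigma_{-i}, \alpha_i \ast c_i) \leq U_i(\sigma_{-i}, \sigma_i)$, the last inequality using that $\alpha_i \ast c_i$ is a mixed strategy and $\sigma_i$ is a best response to $\sigma_{-i}$. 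Hence no pure strategy outside $C_i^c$ is a best response to $\sigma_{-i}$, so $\sigma$ is quasi-strict.
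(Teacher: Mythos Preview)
Your proof is correct and follows essentially the same route as the paper: choose a strong and full dual vector, use pretightness and Proposition~\ref{prop:M-comp-dv} to see that the strategies in $C_i^c$ form a single minimal absorbing set, use that strategies outside $C_i^c$ are eliminated in any full dual reduction (Proposition~\ref{prop:uncoherent}) to conclude that $\Gamma/\alpha$ has the unique profile $\sigma$ with support $\times_i C_i^c$, and then apply Lemma~\ref{app-lm:sdv} for quasi-strictness. The only variation is that you also sketch a direct argument for $B_i\subseteq C_i^c$ via Nash's theorem instead of invoking Proposition~\ref{prop:uncoherent}; this alternative is valid but, as you note yourself, the shorter route through Proposition~\ref{prop:uncoherent} is available and is exactly what the paper does (by reference to the proof of Proposition~\ref{prop:zerosum2}).
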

The proof of proposition \ref{prop:pretight} uses results from section \ref{sec:prop},
and so we postpone it to the appendix. Proposition \ref{prop:pretight} does not only show that Nau et al's condition (a)
implies their condition (b), but also that any pretight game has a quasi-strict Nash equilibrium:
a nontrivial fact, since for $n \geq 3$, not all $n$-player games have a quasi-strict Nash equilibrium (van Damme, 1991).
Moreover, from proposition \ref{prop:pretight}, a converse of Nau et al's result may be obtained.
Namely, if a game is pretight, then there is a Nash equilibrium in the relative interior of the
correlated equilibrium polytope (Viossat, 2006).\\

\noindent \textbf{Dimension of the correlated equilibrium polytope. } We now turn to another property of the correlated equilibrium polytope. It is well known that in generic $2 \times 2$ games,
the correlated equilibrium polytope has dimension 0 or 3, but not 1 nor 2. This might seem a peculiarity of  $2 \times 2$ games, but dual reduction allows to show that in generic finite games, there are dimensions that the correlated  equilibrium polytope cannot have:
\begin{proposition}
\label{app-prop:geom}
In a generic finite game in which at least two players have at least two pure strategies, the correlated equilibrium polytope does not have dimension $|C|-2$.\footnote{Here is a more formal statement:
let $n \geq 2$, let $(m_1,...,m_n) \in \N^n$ and assume that $\prod_{j \neq i} m_j \geq 2$ for all $i$ in $\{1,...,n\}$.
Then within the set of $m_1 \times ...\times m_n$ games, the set of games whose correlated equilibrium polytope has dimension $m_1 \times ...\times m_n -2$ has Lebesgue measure $0$.
The restriction to games which are not essentially one-player games is needed, because in $2 \times 1$ games, the correlated equilibrium polytope has generically dimension $0=2-2$.}
\end{proposition}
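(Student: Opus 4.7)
My plan is to analyze the dimension of the correlated equilibrium polytope $P$ via the rank of its tight linear constraints, using a strong and full dual vector $\alpha$ to extract structural information, and ruling out each resulting case by showing it forces polynomial identities on the payoffs that hold only on a Lebesgue-null set of games.

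First, I would observe that $\dim P = |C|-1-r$, where $r$ is the rank of the family of linear functionals on $\mathbb{R}^{|C|}$ associated with the constraints that are tight on every point of $P$. These tight constraints split into jeopardization equalities --- one per triple $(i,c_i,d_i)$ with $c_i\neq d_i$ in which $d_i$ jeopardizes $c_i$ --- and non-negativity equalities $\mu(c^*)=0$ for $c^*\in Z:=\{c\in C:\mu(c)=0\text{ for every correlated equilibrium }\mu\}$. Hence $\dim P = |C|-2$ is equivalent to $r=1$. Next, let $\alpha$ be a strong and full dual vector (a point in the relative interior of the set of dual vectors, known to exist from the preceding discussion). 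Then (i) by Proposition~\ref{prop:M-comp-dv}, $\alpha_i(d_i|c_i)>0$ iff $d_i$ jeopardizes $c_i$; (ii) by the strong property, $D(c,\alpha)>0$ for every $c\in Z$; and (iii) a direct computation, expanding $D(c,\alpha)$ in terms of each player's incentive deviations, gives $\sum_c\mu(c)D(c,\alpha)\leq 0$ for every correlated equilibrium $\mu$, which together with $D(\cdot,\alpha)\geq 0$ and $\mu\geq 0$ forces $D(c,\alpha)=0$ for every $c\in C\setminus Z$.

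I would then case-split on $Z$. If $Z=\emptyset$, then $D(\cdot,\alpha)\equiv 0$, and generically the jeopardization functionals on $\mathbb{R}^{|C|}$ are linearly independent, so $r=1$ forces exactly one jeopardization $(i,c_i,d_i)$ with $c_i\neq d_i$. By (i), $\alpha$ is then of the form $\alpha_j\ast c_j=c_j$ for every $j\neq i$ and every $c_j\in C_j$, $\alpha_i$ is the identity on $C_i\setminus\{c_i\}$, and $\alpha_i\ast c_i=(1-\epsilon)c_i+\epsilon d_i$ for some $\epsilon>0$. A direct computation then yields $D(c,\alpha)=\epsilon[U_i(c_{-i},d_i)-U_i(c_{-i},c_i)]$ when the $i$-th coordinate of $c$ is $c_i$ and $0$ otherwise, so $D\equiv 0$ forces the $\prod_{j\neq i}m_j\geq 2$ payoff equalities $U_i(c_{-i},d_i)=U_i(c_{-i},c_i)$ for all $c_{-i}$, a subvariety of positive codimension. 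If $Z\neq\emptyset$, property (ii) makes $\alpha$ non-trivial, so by (i) at least one jeopardization $(i,c_i,d_i)$ with $c_i\neq d_i$ exists; under the hypothesis $\prod_{j\neq i}m_j\geq 2$ the corresponding jeopardization functional involves at least two coordinates and is generically not proportional to any single non-negativity functional $\mu\mapsto\mu(c^*)$, so $r=1$ again forces $\prod_{j\neq i}m_j-1\geq 1$ payoff equalities of positive codimension.

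The main technical obstacle is making ``generically'' precise. Both $Z$ and the collection of jeopardization triples depend on the game piecewise-linearly, so one must stratify the space of $m_1\times\cdots\times m_n$ games according to the combinatorial type of $(Z,\text{jeopardization set})$ and check on each stratum that $r=1$ entails a genuine polynomial identity on the payoff entries. The hypothesis $\prod_{j\neq i}m_j\geq 2$ is exactly what ensures every jeopardization functional involves at least two variables, which in turn guarantees positive codimension and excludes the $2\times 1$ degeneracy noted in the footnote.
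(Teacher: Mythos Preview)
Your approach differs substantially from the paper's. You work directly with the rank $r$ of the tight constraints and use the strong/full dual vector $\alpha$ together with a case split on $Z$; the paper instead first proves a structural lemma (Lemma~\ref{lm:weakdom2}): if a dual vector eliminates a strategy of player $i$ while leaving every other player's strategy set intact, then that strategy is weakly dominated by (or payoff-equivalent to) some mixed strategy. With this in hand, the paper's argument is short. Generically either some player has a strictly dominated strategy, in which case a whole slice $\{c_i\}\times C_{-i}$ has zero probability and $\dim P\leq |C|-|C_{-i}|-1<|C|-2$; or conditions~(\ref{eq:nodom}) and~(\ref{app-eq:gen2}) hold. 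In the latter case, if the game is not elementary some $d_i$ jeopardizes $c_i$, Proposition~\ref{prop:M-comp-dv} produces a dual vector with $c_i\notin C_i/\alpha_i$, and Lemma~\ref{lm:weakdom2} plus~(\ref{eq:nodom}) force this dual vector to act nontrivially on a second player $j$, yielding a second jeopardization hyperplane $H_{c_j,d_j}$ distinct from $H_{c_i,d_i}$ by~(\ref{app-eq:gen2}); hence $\dim P\leq |C|-3$.

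Your route can be made to work, but the last paragraph of your proposal names a genuine difficulty you have not resolved: both $Z$ and the jeopardization set depend on the game, so ``generically the jeopardization functionals are linearly independent'' is not a single polynomial condition but a statement about a game-dependent family, and the stratification you allude to is only sketched. There is also a wrinkle in your $Z=\emptyset$ case: the equalities $U_i(c_{-i},d_i)=U_i(c_{-i},c_i)$ that you deduce from ``exactly one jeopardization'' make that jeopardization functional identically zero and simultaneously force $(i,d_i,c_i)$ to be a jeopardization as well, so the hypothesis of exactly one jeopardization is never realized --- your argument is really by contradiction with the genericity assumption, which is fine but should be stated that way. The paper's Lemma~\ref{lm:weakdom2} sidesteps all of this by converting the dual-vector information into a domination statement, so the genericity conditions become two explicit open conditions~(\ref{eq:nodom}) and~(\ref{app-eq:gen2}) on the payoffs rather than properties of a game-dependent collection of functionals.
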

We first need to show that, unless a pure strategy is dominated or
equivalent to a mixed strategy, there is no dual reduction which
simply consists in eliminating this strategy (the converse has been
shown by Myerson).
\begin{lemma}
\label{lm:weakdom2}%
Let $c_i \in C_i$; assume that there exists a dual vector $\alpha$
such that $c_i \notin C_i/\alpha_i$ and $C_j/\alpha_j=C_j$ for all
$j$ in $N \backslash \{i\}$. Then there exists a mixed strategy
$\sigma_i$ in $\Delta(C_i)$ such that $\sigma_i \neq c_i$ and
$U_i(c_{-i},\sigma_i) \geq U_i(c)$ for all $c_{-i}$ in $C_{-i}$.
\end{lemma}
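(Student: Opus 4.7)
The plan is to set $\sigma_i := \alpha_i \ast c_i$ and check that this mixed strategy does the job. The proof is essentially a one-line unwinding of definitions, and the key observation is that when the deviation plans $\alpha_j$ are trivial for every $j\neq i$, the dual-vector inequality $D(c,\alpha)\geq 0$ collapses to a statement about player $i$ alone.

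First I would translate the hypothesis $C_j/\alpha_j = C_j$ for every $j\neq i$: it says that every pure strategy of player $j$ belongs to $C_j/\alpha_j$, i.e., is itself $\alpha_j$-stationary. Hence $\alpha_j \ast c_j = c_j$ for all such $c_j$, and therefore
\[ D_j(c,\alpha_j) = U_j(c_{-j}, \alpha_j \ast c_j) - U_j(c) = 0 \quad \forall c \in C, \ \forall j \neq i. \]
Since $\alpha$ is a dual vector, $D(c,\alpha)\geq 0$, so this reduces to
\[ D_i(c,\alpha_i) = U_i(c_{-i}, \alpha_i \ast c_i) - U_i(c) \geq 0 \quad \forall c \in C. \]
Specializing to profiles whose $i$-component equals $c_i$, I read off $U_i(c_{-i},\sigma_i) \geq U_i(c_{-i},c_i)$ for every $c_{-i}\in C_{-i}$, which is exactly the inequality demanded by the lemma.

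Second I would verify that $\sigma_i \neq c_i$, and this is where the remaining hypothesis $c_i \notin C_i/\alpha_i$ is used. If $\alpha_i \ast c_i = c_i$, then $\{c_i\}$ would be a minimal absorbing set of the Markov chain on $C_i$ induced by $\alpha_i$, and the Dirac measure on $c_i$ would be the unique $\alpha_i$-stationary distribution supported on it; hence $c_i \in C_i/\alpha_i$, contradicting the assumption. Therefore $\sigma_i = \alpha_i \ast c_i \neq c_i$, completing the proof.

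There is really no hard step here: once one notices that trivial $\alpha_j$ for $j\neq i$ forces $D_j \equiv 0$, the dual-vector inequality is tailor-made to produce the required weak-dominance statement, and the non-stationarity of $c_i$ under $\alpha_i$ is exactly what ensures the dominating strategy $\sigma_i$ is genuinely different from $c_i$. The content of the lemma is thus conceptual rather than computational: in a ``one-player'' dual reduction, elimination of $c_i$ is nothing more than the existence of a distinct mixed strategy that weakly dominates $c_i$.
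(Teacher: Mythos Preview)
Your proof is correct and follows essentially the same approach as the paper: you set $\sigma_i=\alpha_i\ast c_i$, use the stationarity of all pure strategies $c_j$ for $j\neq i$ to reduce the dual-vector inequality to $U_i(c_{-i},\sigma_i)\geq U_i(c)$, and then invoke $c_i\notin C_i/\alpha_i$ to conclude $\sigma_i\neq c_i$. The paper simply cites equation (\ref{eq2:rev-variant-lemma1}) for the first step, whereas you spell out directly that $D_j(c,\alpha_j)=0$ for $j\neq i$; these are the same computation.
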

\begin{proof}
Let $\sigma_i= \alpha_i \ast c_i$. Since for all $j \neq i$, every
pure strategy in $C_j$ is $\alpha_j$-stationary, equation
(\ref{eq2:rev-variant-lemma1}) yields:
$$U_i(c_{-i},\sigma_i) \geq U_i(c) \quad \quad \forall c_{-i}
\in C_{-i}$$
Furthermore, $c_i \notin C_i/\alpha_i$ hence $c_i$ is not $\alpha_i$-stationary. Therefore $\sigma_i \neq c_i$
\end{proof}
We now prove the proposition. For every $i$ in $N$, and every couple of pure strategies $(c_i,d_i)$ in $C_i \times C_i$, let
\[H_{c_i,d_i}:=\left\{ \mu \in \R^{|C|}, \sum_{c_{-i} \in C_{-i}} \mu (c) [U_i(c_{-i}, d_i)-U_i(c)] = 0\right\}\]
Thus, $d_i$ jeopardizes $c_i$ if and only if all correlated equilibria belong to $H_{c_i,d_i}$.

Generically, either a player has a strictly dominated strategy (case 1) or the following properties are satisfied simultaneously (case 2):
\begin{equation}%
\label{eq:nodom}%
\sigma_i \neq c_i \Rightarrow \exists c_{-i} \in C_{-i},
U_i(c_{-i},\sigma_i)<U_i(c), \, \forall i \in N, \forall c_i \in
C_i, \forall \sigma_i \in \Delta(C_i)
\end{equation}
\begin{equation}
\label{app-eq:gen2} H_{c_i,d_i} \neq H_{c_j,d_j}, \quad \forall
i\neq j, \forall (c_i,d_i) \in C_i \times C_i, \forall (c_j,d_j) \in
C_j \times C_j
\end{equation}
so it suffices to examine these two cases.\\

\emph{Case 1}: let $c_i$ be strictly dominated. Then $c_i$ has marginal
probability zero in all correlated equilibria. The correlated
equilibrium polytope may thus be seen as a subset of
$\Delta(\hat{C})$ with $\hat{C}=(C_i\backslash\{c_i\})\times C_{-i}$;
therefore, its dimension is at most $(|C_i|-1) \times |C_{-i}|=|C|-|C_{-i}|-1 < |C|-2$.\\

\emph{Case 2}: if the game is elementary, then the correlated equilibrium
polytope has dimension $|C|-1 \neq |C|-2$. Otherwise, there exists a
player $i$ and a couple of pure strategies $(c_i,d_i)$, with $c_i
\neq d_i$, such that $d_i$ jeopardizes $c_i$. That is, every
correlated equilibrium is in $H_{c_i,d_i}$. Therefore, by
proposition \ref{prop:M-comp-dv}, there exists a dual vector
$\alpha$ such that $c_i \notin C_i/\alpha_i$. Due to condition
(\ref{eq:nodom}) and lemma \ref{lm:weakdom2}, this implies that
there exists $j$ in $N \backslash \{i\}$ and $c_j$ in $C_j$ such
that $c_j \notin C_j/\alpha_j$. It follows that $c_j$ is jeopardized
by some strategy $d_j$ in $C_j \backslash \{c_j\}$.
That is, every correlated equilibrium is in $H_{c_j,d_j}$. Therefore
the correlated equilibrium polytope is a subset of $\Delta(C) \cap
H_{c_i,d_i} \cap H_{c_j,d_j}$. It follows from (\ref{eq:nodom}) that
$H_{c_i,d_i}$ and $H_{c_j,d_j}$ are proper subsets of $\R^{|C|}$,
and from (\ref{app-eq:gen2}) that they are distinct. As an
intersection of two distinct hyperplanes of $\R^{|C|}$, $H_{c_i,d_i}
\cap H_{c_j,d_j}$ is a vector space of dimension $|C|-2$. Its
intersection with the simplex $\Delta(C)$ has at most dimension
$|C|-3$ and it includes the correlated equilibrium polytope.
Therefore, this polytope has at most dimension $|C|-3$. This
completes the proof.

\section{Properties of dual reduction}
\label{sec:prop} The examples of applications given in the previous
section suggest, at least to us, that it might be worthwhile to
study in greater depth the properties of dual reduction. This is the
object of this section. Section \ref{sec:rescaling} shows that,
though positive affine transformations and other rescalings of the
payoffs affect the set of dual vectors, they do not change the ways
in which a game may be reduced.  In section \ref{sec:strat}, we
try to understand which kind of strategies are eliminated in dual reductions.
We show for instance that strategies with zero probability in all correlated equilibria are
eliminated in all full dual reductions, but that this is not true of weakly dominated strategies.
Section \ref{sec:classes} studies the properties of dual reduction in specific classes of games such as zero-sum
games or symmetric games. Finally, section \ref{sec:uniqueness}
shows that, in almost all $2$-player games, the process of iterative
full dual reduction is uniquely defined.

\subsection{Payoff rescaling and dual reduction}
\label{sec:rescaling}
Let $\Gamma$ and $\Gamma'$ be two games with the same sets of
players and strategies. Let  $U'_i$ 
denote the utility function of player $i$ in $\Gamma'$.
\begin{definition}
\label{def:rescaling}%
The game $\Gamma'$ is a \emph{rescaling} of $\Gamma$ if for every player
$i$ in $N$, there exists a positive constant $a_i$ and a function $f_i: C_{-i} \to \mathbb{R}$ such that: %
$$ U'_i(c)=a_i.U_i(c) + f_i(c_{-i}), \hspace{0.5 cm} \forall c \in
C$$
\end{definition}
If $\Gamma'$ is a rescaling of $\Gamma$, then $\Gamma$ and $\Gamma'$ represent essentially the same economic situation. So it is reassuring to note that they can be reduced in the same ways:
\begin{proposition}
\label{prop:rescaling} Let $\Gamma'$ be a rescaling of $\Gamma$. If
$\alpha$ is a dual vector of $\Gamma$ then there exists a dual vector
$\alpha'$ of $\Gamma'$ such that $\Gamma'/\alpha'=\Gamma'/\alpha$. That is, $\Gamma'/\alpha$ is a dual reduction of $\Gamma'$.
\end{proposition}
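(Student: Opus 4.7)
My approach is to produce $\alpha'$ by explicit construction, perturbing $\alpha$ toward the trivial dual vector with per-player weights tuned to the scalars $a_i$. The starting observation is that for any deviation plan $\beta_i$ and any $c \in C$,
\[ D'_i(c, \beta_i) = U'_i(c_{-i}, \beta_i \ast c_i) - U'_i(c) = a_i\bigl[U_i(c_{-i}, \beta_i \ast c_i) - U_i(c)\bigr] = a_i\, D_i(c, \beta_i), \]
since the additive shift $f_i(c_{-i})$ does not depend on player $i$'s strategy and hence cancels. Thus if one naively set $\alpha' = \alpha$, one would get $D'(c, \alpha) = \sum_i a_i\, D_i(c, \alpha_i)$, a reweighted sum which is not guaranteed to be nonnegative even when $D(c, \alpha) = \sum_i D_i(c, \alpha_i) \geq 0$. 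This is the main obstacle, and overcoming it requires equalizing the players' weights.

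To do so, I would set $\lambda := \min_{j \in N} a_j$, let $t_i := \lambda/a_i \in (0, 1]$, and define
\[ \alpha'_i \ast c_i := (1 - t_i)\, c_i + t_i\, (\alpha_i \ast c_i), \quad \forall i \in N,\ \forall c_i \in C_i. \]
By linearity of $D_i(c, \cdot)$ in the deviation plan, $D'_i(c, \alpha'_i) = t_i\, a_i\, D_i(c, \alpha_i) = \lambda\, D_i(c, \alpha_i)$, so $D'(c, \alpha') = \lambda\, D(c, \alpha) \geq 0$ for every $c \in C$; hence $\alpha'$ is indeed a dual vector of $\Gamma'$.

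It then remains to check that $\Gamma'/\alpha' = \Gamma'/\alpha$, i.e., that $C_i/\alpha'_i = C_i/\alpha_i$ for every player $i$ (both reduced games using the same payoffs $U'_i$). The kernel $\alpha'_i = (1 - t_i)\,\mathrm{Id} + t_i\, \alpha_i$ has the same off-diagonal positive entries as $\alpha_i$ and only adds mass on the diagonal, which preserves communication classes and hence leaves the minimal absorbing sets and transient states of the induced Markov chain on $C_i$ unchanged. Furthermore, $\alpha'_i \ast \sigma_i = \sigma_i$ is equivalent to $t_i(\alpha_i \ast \sigma_i - \sigma_i) = 0$, hence (since $t_i > 0$) to $\alpha_i$-stationarity of $\sigma_i$. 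Therefore $\alpha_i$ and $\alpha'_i$ induce the same stationary distributions on the same minimal absorbing sets, so $C_i/\alpha'_i = C_i/\alpha_i$, and the proof is complete. The crux is the mixing idea: once one sees that slowing down player $i$'s deviation plan by the factor $t_i$ rescales that player's contribution to $D$ by $t_i a_i$ without affecting absorbing sets or stationary distributions, everything falls into place.
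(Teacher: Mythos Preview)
Your proof is correct and is essentially identical to the paper's own argument: the paper also sets $\epsilon_i = a_k/a_i$ with $a_k=\min_j a_j$ (your $\lambda$ and $t_i$), defines $\alpha'_i \ast c_i = \epsilon_i(\alpha_i\ast c_i)+(1-\epsilon_i)c_i$, computes $D'(c,\alpha')=a_k\,D(c,\alpha)\geq 0$, and invokes the observation that $\alpha_i^{\epsilon}\ast\sigma_i-\sigma_i=\epsilon(\alpha_i\ast\sigma_i-\sigma_i)$ to conclude $C_i/\alpha'_i=C_i/\alpha_i$. Your discussion of preserved communication classes is a slight elaboration, but the construction and the key steps coincide.
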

This result would be obvious if a game and its rescalings had the same dual vectors, but the following example shows that this is not the case.
\begin{example}
\label{ex:rescaling} $$
\begin{array}{ccc}
        \begin{array}{ccc}
                            & x_2  &  y_2 \\
                        x_1 & 1,-1 & -1,1    \\
                        y_1 & -1,1  & 1,-1
       \end{array}
& \hspace{2 cm} &
        \begin{array}{ccc}
                            & x_2  &  y_2 \\
                        x_1 & 2,-1 & -2,1    \\
                        y_1 & -2,1  & 2,-1
       \end{array}
\end{array}
$$
Let $\Gamma$ denote the game on the left (Matching Pennies)
and $\Gamma'$ its rescaling on the right. Define $\alpha$ by
$\alpha_i \ast x_i=\alpha_i \ast y_i =\frac{1}{2}x_i+\frac{1}{2}y_i$ for every $i$ in $\{1,2\}$.
Then $\alpha$ is a dual vector for $\Gamma$ but not for $\Gamma'$.
\end{example}
The key is that different vectors of deviation plans may induce the
same reduced game. For instance, in example  \ref{ex:rescaling},
define $\alpha'$ by $\alpha'_1 \ast c_1=\frac{1}{2}(c_1 + \alpha_1
\ast c_1)$ for all $c_1$ in $\{x_1,y_1\}$, and $\alpha'_2=\alpha_2$.
It may be checked that $\alpha'$ is a dual vector of $\Gamma'$ and
that $\alpha$ and $\alpha'$ induce the same reduced  game:
$\Gamma'/\alpha'=\Gamma'/\alpha$. Therefore, even though $\alpha$ is
not a dual vector of $\Gamma'$, $\Gamma'/\alpha$ is a dual reduction
of $\Gamma$. We now generalize this idea.
\begin{lemma}
\label{lm:rescaling}
Let $\alpha_i$ be a deviation plan for
player $i$. Let $0<\epsilon \leq 1$. Define the deviation plan $\alpha_i^{\epsilon}$ by
$\alpha_i^{\epsilon} \ast c_i =\epsilon (\alpha_i \ast c_i) + (1-\epsilon) c_i$. Then $C_i/\alpha_i=C_i/\alpha_i^{\epsilon}$.
\end{lemma}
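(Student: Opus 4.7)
The plan is to show that the Markov chains induced by $\alpha_i$ and $\alpha_i^{\epsilon}$ on $C_i$ have exactly the same minimal absorbing sets, and that a mixed strategy is $\alpha_i$-stationary if and only if it is $\alpha_i^{\epsilon}$-stationary. Once both facts are established, $C_i/\alpha_i = C_i/\alpha_i^{\epsilon}$ follows immediately from the definition of the reduced pure strategy set as the collection of stationary distributions supported in a minimal absorbing set.

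For the first fact, I would compare supports. The case $\epsilon = 1$ is trivial, so assume $0 < \epsilon < 1$. Then for any $c_i \in C_i$ the support of $\alpha_i^{\epsilon} \ast c_i$ is the union of the support of $\alpha_i \ast c_i$ and $\{c_i\}$. Hence a subset $B_i \subset C_i$ satisfies ``$\text{supp}(\alpha_i^{\epsilon} \ast c_i) \subset B_i$ for every $c_i \in B_i$'' exactly when it satisfies ``$\text{supp}(\alpha_i \ast c_i) \subset B_i$ for every $c_i \in B_i$'' (the extra point $c_i$ already lies in $B_i$). So the absorbing sets of the two chains coincide, and therefore so do the minimal absorbing sets.

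For the second fact, observe that by linearity,
\[
\alpha_i^{\epsilon} \ast \sigma_i = \epsilon(\alpha_i \ast \sigma_i) + (1-\epsilon)\sigma_i.
\]
Thus $\alpha_i^{\epsilon} \ast \sigma_i = \sigma_i$ is equivalent to $\epsilon(\alpha_i \ast \sigma_i) = \epsilon \sigma_i$, which, since $\epsilon > 0$, is equivalent to $\alpha_i \ast \sigma_i = \sigma_i$. So the sets of $\alpha_i$-stationary and $\alpha_i^{\epsilon}$-stationary mixed strategies are identical. Restricting to those supported in a minimal absorbing set (which are the same sets by the previous step) gives $C_i/\alpha_i = C_i/\alpha_i^{\epsilon}$.

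There is no real obstacle here; the argument is essentially bookkeeping once one notices that the ``lazy'' component $(1-\epsilon)c_i$ adds $c_i$ itself to the support of $\alpha_i^{\epsilon} \ast c_i$ (harmless for the absorbing-set condition because $c_i$ is required to lie in the set) and cancels out of the stationarity equation. The mildly delicate point is just making sure one treats $\epsilon = 1$ separately when describing the support, and uses $\epsilon > 0$ when dividing through in the stationarity equivalence.
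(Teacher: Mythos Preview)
Your proof is correct and follows the same core idea as the paper's: the key computation is that $\alpha_i^{\epsilon}\ast\sigma_i-\sigma_i=\epsilon(\alpha_i\ast\sigma_i-\sigma_i)$, so the two maps have exactly the same stationary strategies. The paper's proof stops there, leaving implicit (via standard Markov chain theory) that equality of the full sets of stationary distributions forces equality of the minimal absorbing sets and hence of $C_i/\alpha_i$; you make that step explicit with a direct support argument, which is a harmless and perhaps clearer addition rather than a genuinely different route.
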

\begin{proof}
For any mixed strategy $\sigma_i$ in $\Delta(C_i)$, %
$\alpha_i^{\epsilon} \ast \sigma_i -\sigma_i =\epsilon (\alpha_i
\ast \sigma_i - \sigma_i)$. Therefore, $\alpha_i$ and
$\alpha_i^{\epsilon}$ induce the same stationary strategies
\end{proof}
We can now prove proposition \ref{prop:rescaling}. The proof consists in showing that dual vectors of $\Gamma$ may be ``rescaled" into dual vectors of $\Gamma'$.
Let $\alpha$ be a dual vector of $\Gamma$. Let $a_k=\min_{i\in N} a_i$ and, for each $i$ in $N$, let $\epsilon_i=a_k/a_i$ (the constants $a_i$ are those of
definition \ref{def:rescaling}). Recall the definition of $\alpha_i^{\epsilon}$ from lemma \ref{lm:rescaling} and let $\alpha'=(\alpha_i^{\epsilon_i})_{i\in N}$.
For every $c$ in $C$, let $D'(c,\alpha')=\sum_{i \in N}[U'_i(c_{-i}, \alpha'_{i} \ast c_i)-U'_i(c)]$. A simple computation shows that
\begin{equation}
\label{eq:proofrescaling} D'(c,\alpha')=a_k \times
D(c,\alpha) \geq 0 \quad \forall c \in C
\end{equation}
where the inequality follows from the fact that $\alpha$ is a dual vector of $\Gamma$. Therefore $\alpha'$ is a dual vector of
$\Gamma'$. But lemma \ref{lm:rescaling} implies that
$\Gamma'/\alpha'= \Gamma'/\alpha$. This completes the proof.

\subsection{Which strategies and equilibria are eliminated?}
\label{sec:strat}
We say that  the pure strategy $c_i$ is \emph{eliminated} in the dual reduction $\Gamma/\alpha$ if $\sigma_i(c_i)=0$ for all
$\sigma_i$ in $C_i/\alpha_i$. A pure strategy profile $c$ is eliminated if $\sigma(c)=0$ for all $\sigma$ in
$C/\alpha$; that is, if for some $i$ in $N$, $c_i$ is eliminated. Finally, a correlated equilibrium $\mu$ is eliminated if there is no correlated strategy of $\Gamma/\alpha$ which induces $\mu$ in $\Gamma$. In this section, we try to understand which kind of strategies and equilibria are eliminated in dual reductions.

A first remark is that, as shown by Myerson, if a pure strategy is weakly dominated then
there is a dual reduction which consists in eliminating this strategy. In that sense, dual reduction generalizes elimination of
dominated strategies. However,  this does not mean that weakly
dominated strategies are eliminated in all non-trivial dual
reductions, not even in full dual reductions:
\begin{example}
\label{ex:weakdom}
$$
\begin{array}{ccc}
                & x_2 &  y_2\\
            x_1 & 1,1 & 1,0 \\
            y_1 & 1,0 & 0,0
\end{array}
%
%
$$%
In the above game, $\mu$ is a correlated equilibrium if and only
if $y_2$ is not played in $\mu$. That is,
$\mu(x_1,y_2)=\mu(y_1,y_2)=0$. Therefore $x_1$ and $y_1$ jeopardize each other.
It follows that, in all full dual reductions, $x_1$ and $y_1$ are grouped together, hence $y_1$ is not eliminated.
\end{example}
By contrast, in full dual reductions, strategies that have marginal probability zero in all correlated equilibria are eliminated, and a similar result holds for strategy profiles.

\begin{proposition}
\label{prop:uncoherent}
Let $c \in C$ (resp. $c_i \in C_i$) be a pure strategy profile (resp. pure strategy) with probability zero in
all correlated equilibria. Then $c$ (resp. $c_i$) is eliminated in all full dual reductions and in all elementary iterative dual reductions.
\end{proposition}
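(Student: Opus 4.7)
The plan is to handle the full-dual-reduction statement and the elementary-iterative-reduction statement separately, with the former resting on a strong and full dual vector and the latter on the strict full-support correlated equilibrium guaranteed by elementarity.

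For the full dual reduction case, I would fix a dual vector $\alpha$ that is both strong and full (which exists because, as observed in Section \ref{sec:app}, any strict convex combination of a strong dual vector and a full dual vector is again strong and full). Since all full dual vectors eliminate the same pure strategies of $\Gamma$, proving the claim for this particular $\alpha$ suffices. The core step is a contradiction argument based on equation (\ref{eq:rev-variant-lemma1}): for any tuple $\sigma = (\sigma_j)_{j \in N}$ with $\sigma_j \in C_j/\alpha_j$, each $\sigma_j$ is $\alpha_j$-stationary, so the right-hand side of (\ref{eq:rev-variant-lemma1}) vanishes, giving
\[
\sum_{c' \in C} \sigma(c') D(c', \alpha) = 0,
\]
and, since each summand is nonnegative, $\sigma(c') D(c',\alpha) = 0$ for every $c' \in C$. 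Now if a profile $c$ had probability zero in every correlated equilibrium yet were not eliminated, some $\sigma \in C/\alpha$ would satisfy $\sigma(c) > 0$, while strongness of $\alpha$ would give $D(c,\alpha) > 0$, contradicting the vanishing product. If instead a strategy $c_i$ had marginal probability zero in every correlated equilibrium yet were not eliminated, I would produce a profile reaching the same contradiction: pick $\sigma_i \in C_i/\alpha_i$ with $\sigma_i(c_i) > 0$, arbitrary $\sigma_j \in C_j/\alpha_j$ for $j \neq i$, and some $c'_{-i}$ in the support of $\sigma_{-i}$; then $c := (c'_{-i}, c_i)$ has probability zero in all correlated equilibria (its $i$-marginal is zero) while $\sigma(c) > 0$, again contradicting $\sigma(c)D(c,\alpha) = 0$.

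For the elementary iterative case, let $\Gamma^{e}$ be an elementary game obtained from $\Gamma$ by iterative dual reduction and let $\nu$ be a strict correlated equilibrium of $\Gamma^{e}$ with full support. Composing the maps $\mu \mapsto \bar{\mu}$ along the iteration lifts $\nu$ to a correlated equilibrium $\bar{\nu}$ of $\Gamma$ with $\bar{\nu}(c) = \sum_{\sigma} \nu(\sigma) \sigma(c)$, the sum running over pure strategy profiles of $\Gamma^{e}$ viewed as mixed profiles of $\Gamma$; the analogous identity holds for marginals. Because $\nu$ has full support in $\Gamma^{e}$, $\bar{\nu}(c) > 0$ as soon as some such $\sigma$ satisfies $\sigma(c) > 0$, that is, as soon as $c$ is not eliminated along the iterative reduction, and likewise for $c_i$ via marginals. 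Contrapositively, any profile or strategy with probability zero in every correlated equilibrium of $\Gamma$ must be eliminated.

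The main obstacle is the strategy-case argument for full dual reductions: one must be careful not to conflate ``$c_i$ is not eliminated'' with ``$c_i$ has positive probability in some $\sigma \in C/\alpha$'', and it is essential to complete $\sigma_i$ with stationary components for the other players so that (\ref{eq:rev-variant-lemma1}) applies and a single profile $c$ witnessing both $\sigma(c) > 0$ and $D(c,\alpha) > 0$ can be extracted. Once a strong and full dual vector is available, the remaining steps are essentially bookkeeping.
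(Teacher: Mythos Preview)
Your proposal is correct and follows essentially the same route as the paper: for full dual reductions you take a strong and full dual vector, use equation (\ref{eq:rev-variant-lemma1}) with all components $\alpha_j$-stationary to get $\sum_c \sigma(c)D(c,\alpha)=0$, and then combine nonnegativity of the summands with $D(c,\alpha)>0$ from strongness; for elementary iterative reductions you lift a full-support correlated equilibrium of $\Gamma^e$ back to $\Gamma$. The only cosmetic difference is that the paper handles the strategy case $c_i$ by first observing that every profile $(c_{-i},c_i)$ has zero probability in all correlated equilibria and then invoking the profile case, whereas you run the contradiction directly; the content is the same.
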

\begin{proof}
First consider elementary iterative dual reductions: if $\Gamma^{e}$
is an elementary iterative dual reduction of $\Gamma$, it has a
correlated equilibrium with full support, which induces a correlated
equilibrium of $\Gamma$. Therefore all pure strategy profiles (resp.
pure strategies) of $\Gamma$ that have not been eliminated in $\Gamma^{e}$
have positive probability in some correlated
equilibrium.\footnote{I owe my understanding of this point to Roger
Myerson.}

Now consider full dual reductions. Let $\alpha$ be a dual vector.
Let $\sigma \in C/\alpha$. By definition of $C/\alpha$, $\alpha_i
\ast \sigma_i=\sigma_i$ for all $i$ in $N$. Therefore, it follows
from (\ref{eq:rev-variant-lemma1}) that $\sum_{c \in C} \sigma(c)
D(c,\alpha) = 0$. Since by definition of dual vectors, $D(c,\alpha)
\geq 0$ for all $c$ in $C$, this implies that
$$D(c,\alpha)>0 \Rightarrow \sigma(c)=0.$$
Assume now that $\alpha$ is strong and full, and let $c$ be a pure
strategy profile with probability zero in all correlated equilibria.
Since $\alpha$ is a strong dual vector, we have $D(c,\alpha)>0$
hence $\sigma(c)=0$. Therefore, $c$ is eliminated in the dual
reduction $\Gamma/\alpha$. But since $\alpha$ is a
full dual vector and since in all full dual reductions, the same
strategy profiles are eliminated, it follows that $c$ is eliminated
in all full dual reductions.\footnote{Since all full dual reductions eliminate the same strategies, they
also eliminate the same strategy profiles.}

Finally, let $c_i$ be a pure strategy profile with marginal probability zero in all correlated equilibria.
For all $c_{-i}$ in $C_{-i}$, $c=(c_{-i},c_i)$ has probability zero in all correlated equilibria.
Therefore, in a full dual reduction, $(c_{-i},c_i)$ is eliminated for all $c_{-i}$ in $C_{-i}$, whence $c_i$ itself is eliminated.
\end{proof}

We now turn to equilibria. Let us say that in a dual reduction, a correlated equilibrium $\mu$ is eliminated if there is no correlated strategy of the reduced game which induces $\mu$ in the original game.
\begin{proposition}
Strict correlated equilibria cannot be eliminated, not even in an
iterative dual reduction.
\end{proposition}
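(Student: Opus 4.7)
The plan is to reduce everything to a single-step claim: if $\alpha$ is any dual vector of $\Gamma$ and $\mu$ is a strict correlated equilibrium of $\Gamma$, then there exists $\nu\in\Delta(C/\alpha)$ with $\bar\nu=\mu$ that is itself a strict correlated equilibrium of $\Gamma/\alpha$. An induction on the length of the iterative reduction then finishes the proof, since at each step we can lift $\mu$ further and the ``strict correlated equilibrium of the current reduction'' property is the invariant we carry along.

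The main tool is a complementary slackness relation between $\mu$ and $\alpha$. Regrouping the definition of $D_j$, one has
\[\sum_{c \in C}\mu(c)D_j(c,\alpha_j) = \sum_{c_j,d_j}\alpha_j(d_j|c_j)\sum_{c_{-j} \in C_{-j}}\mu(c_{-j},c_j)\left[U_j(c_{-j},d_j)-U_j(c_{-j},c_j)\right].\]
Since $\mu$ is a correlated equilibrium each inner sum is $\leq 0$, so the left-hand side is $\leq 0$; summing over $j$ and invoking $D(c,\alpha)\geq 0$ forces $\sum_c\mu(c)D(c,\alpha)=0$ and hence $\sum_c\mu(c)D_j(c,\alpha_j)=0$ for every $j$. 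Every summand above being $\leq 0$, each must vanish, so $\alpha_j(d_j|c_j)>0$ forces the corresponding incentive sum to be $0$. Combined with strictness of $\mu$, this implies that for every $c_j$ in the support of the marginal $\mu_j$ and every $d_j\neq c_j$, $\alpha_j(d_j|c_j)=0$; hence $\alpha_j\ast c_j=c_j$ and $c_j$ is kept as a pure strategy of $\Gamma/\alpha$.

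I then define $\nu\in\Delta(C/\alpha)$ by $\nu(c)=\mu(c)$ whenever $c$ is a pure strategy profile all of whose components are kept as pure strategies, and $\nu(\sigma)=0$ otherwise. By the previous paragraph the support of $\mu$ is contained in the set of qualifying profiles, and a short check gives $\bar\nu=\mu$ (on $c\in C$ with some component not kept, both sides vanish). It remains to verify that $\nu$ is strict in $\Gamma/\alpha$. Fix a player $i$ and a pure strategy $c_i\in C_i/\alpha_i$ in the support of $\nu_i$, so $c_i$ lies in the support of $\mu_i$, and let $\tau_i\in C_i/\alpha_i$ with $\tau_i\neq c_i$. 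If $\tau_i$ is a pure strategy $d_i\neq c_i$, strictness of $\mu$ at $c_i$ directly yields a strict incentive inequality. Otherwise $\tau_i$ has support in a minimal absorbing set of size $\geq 2$, so none of the $d_i$ in its support are $\alpha_i$-stationary; by the argument above none of them lie in the support of $\mu_i$, and in particular each such $d_i\neq c_i$. Linearity in $\tau_i$ combined with strictness of $\mu$ applied termwise then gives a strict inequality. The one subtle point is precisely this last case analysis, verifying that strictness of $\mu$ survives the passage to the mixed-strategy deviations available in the reduced game; once that is handled the one-step claim is proved and the induction on iterative reductions closes immediately.
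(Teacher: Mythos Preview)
Your proof is correct and follows essentially the same route as the paper's: show that every pure strategy in the support of the marginal $\mu_i$ is $\alpha_i$-stationary (hence kept as a pure strategy in $\Gamma/\alpha$), then observe that the incentive constraints of $\nu$ in the reduced game are convex combinations of incentive constraints of $\mu$ in $\Gamma$, so strictness is inherited, and iterate. The paper phrases the first step through jeopardization (``a strategy with positive marginal probability in $\mu$ cannot be jeopardized by another strategy''), which implicitly uses the same complementary slackness identity you derive explicitly; and it handles the second step in one sentence (``the player's options for deviating are more limited in the reduced game''), whereas you spell out the case analysis for deviations to non-pure $\tau_i\in C_i/\alpha_i$. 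So your argument is the same idea, just unpacked more carefully.
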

\begin{proof}
If $\mu$ is a strict correlated equilibrium, a strategy that has
positive marginal probability in $\mu$ cannot be jeopardized by
another strategy. It follows that in any dual reduction of $\Gamma$,
all pure strategies used in $\mu$ remain as pure
strategies. Furthermore, as the player's options for deviating are
more limited in the reduced game than in $\Gamma$, $\mu$ is a fortiori
a strict correlated equilibrium of the reduced game. Inductively,
in any iterative dual reduction of $\Gamma$, all strategies used in $\mu$ are available and $\mu$ is still a
strict correlated equilibrium.\footnote{The proof shows that a pure strategy that has positive marginal
probability in some strict correlated equilibrium can never be
eliminated nor grouped with other strategies. This generalizes the
fact that elementary games cannot be reduced.}
\end{proof}

By contrast, the following example shows that completely mixed Nash
equilibria may be eliminated in all nontrivial dual reductions:

$$    \begin{array}{cccc}
                            & x_2  &  y_2  & z_2  \\
                        y_1 & 2,0  & 1,1   & -1,-1 \\
                        z_1 & 2,0  & -1,-1 &  1,1
       \end{array}
       \quad \quad \quad
       \begin{array}{ccc}
                            &  y_2  &  z_2  \\
                        y_1 &  1,1  & -1,-1 \\
                        z_1 & -1,-1 &  1,1
       \end{array}
$$
In the left game, playing each strategy with equal probability is a
completely mixed Nash equilibrium. However, the unique nontrivial dual reduction is the game on the right, in which $x_2$ and
thus all completely mixed Nash equilibria of the original game have been eliminated.
(To see that the only nontrivial dual reduction consists in eliminating $x_2$, note that,
for player $2$, $x_2$ is equivalent to $\frac{1}{2} y_2 + \frac{1}{2} z_2$; this implies that $y_2$ and $z_2$ jeopardize
$x_2$. Furthermore, for any $i$ in $\{1,2\}$, $y_i$ and $z_i$ must be stationary under any
dual vector because they have positive probability in a strict
correlated equilibrium. The result follows.)

In the above example, the reduced game is obtained by eliminating a pure strategy of player $2$ which
is redundant in the sense that it yields the same payoffs to player $2$ as another (mixed) strategy.
More generally, let $\Gamma'=\{N, (C'_i)_{i \in N}, (U_i)_{i \in N}\}$ be a game built on $\Gamma$
by omitting some pure strategies. Assume that the omitted strategies are redundant in the following sense:
\begin{equation}
\label{cond:rnf}
\forall i \in N, \forall c_i \in C_i\backslash C'_i, \exists \sigma_i \in \Delta(C'_i), \forall c_{-i} \in C_{-i}, U_i(c_{-i}, c_i)=U_i(c_{-i}, \sigma_i)
\end{equation}
Then $\Gamma'$ is a dual reduction of $\Gamma$. (Indeed, for all $i$ in $N$,
let  $\alpha_i \ast c_i=\sigma_i$ (defined in (\ref{cond:rnf}))  if $c_i \notin C'_i$ and $\alpha_i \ast c_i=c_i$ otherwise;
this defines a dual vector $\alpha$ such that $\Gamma/\alpha=\Gamma'$.) In particular, dual reduction allows to reduce any game into its reduced normal form (Kohlberg and Mertens, 1986).
%

We conclude this section by noting that the links between dual
reduction, elimination of unacceptable strategies (Myerson, 1986)
and perfect correlated equilibria (Dhillon and Mertens, 1996) are
explored in (Viossat, 2005).

\subsection{Dual reduction in specific classes of games}
\label{sec:classes}
\subsubsection{Two-player zero-sum games}
\label{sec:zs}
It is easy to show that any dual reduction of a zero-sum game is a zero-sum game with the same value.
Furthermore, dual vectors may be built easily from optimal
strategies:
\begin{proposition} \label{prop:zerosum1} Let $\Gamma$
denote a two-player zero-sum game and $\alpha$ a profile of deviation plans.
If for all $i=1,2$ and for all $c_i$ in $C_i$, the mixed strategy $\alpha_i \ast c_i$ is an optimal strategy,
then $\alpha$ is a dual vector.
\end{proposition}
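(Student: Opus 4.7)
The plan is to unpack the definition of $D(c,\alpha)$ and exploit the zero-sum structure together with the defining inequality satisfied by optimal strategies.

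First I would fix an arbitrary pure strategy profile $c=(c_1,c_2)\in C$ and write
\[D(c,\alpha)=\bigl[U_1(c_2,\alpha_1\ast c_1)-U_1(c_1,c_2)\bigr]+\bigl[U_2(c_1,\alpha_2\ast c_2)-U_2(c_1,c_2)\bigr].\]
Using $U_2=-U_1$ (the zero-sum hypothesis), the middle $U_1(c_1,c_2)$ terms cancel and this simplifies to
\[D(c,\alpha)=U_1(c_2,\alpha_1\ast c_1)-U_1(c_1,\alpha_2\ast c_2).\]

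Next I would invoke the minimax characterization of optimal strategies. Let $v$ denote the value of $\Gamma$. Since $\alpha_1\ast c_1$ is optimal for player $1$, it guarantees at least $v$ against every (pure, hence every) strategy of player $2$, so $U_1(c_2,\alpha_1\ast c_1)\geq v$. Similarly, since $\alpha_2\ast c_2$ is optimal for player $2$, it holds player $1$ down to at most $v$ against every strategy of player $1$, so $U_1(c_1,\alpha_2\ast c_2)\leq v$. Subtracting these two inequalities yields
\[D(c,\alpha)\geq v-v=0,\]
which is exactly the defining condition for $\alpha$ to be a dual vector. Since $c\in C$ was arbitrary, this finishes the proof.

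There is no real obstacle here: the only thing to watch is keeping the signs straight when converting the second bracket via $U_2=-U_1$. The argument uses nothing beyond the two inequalities characterizing optimal strategies in a two-player zero-sum game, so no Markov-chain or linear-programming machinery is needed at this step.
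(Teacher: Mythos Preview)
Your argument is correct and is essentially the same as the paper's: both fix $c$, use that $\alpha_1\ast c_1$ guarantees at least $v$ and $\alpha_2\ast c_2$ guarantees at most $v$ to player~1, and conclude $D(c,\alpha)\geq 0$. The only cosmetic difference is that you invoke $U_2=-U_1$ first and then compare to $v$, whereas the paper writes $U_2(c_1,\alpha_2\ast c_2)\geq -v$ directly and uses $U_1(c)+U_2(c)=0$ at the end.
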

\begin{proof}
Let $c$ be a pure strategy profile. Since $\alpha_1 \ast c_1$ is
optimal, it follows that $U_1(\alpha_1 \ast c_1, c_2) \geq v$,
where $v$ is the value of the game. Similarly, $U_2(c_1, \alpha_2
\ast c_2) \geq -v$. Therefore, $U_1(\alpha_1 \ast c_1, c_2) +
U_2(c_1, \alpha_2 \ast c_2) \geq 0=
U_1(c)+U_2(c)$.
Since this holds for all $c$ in $C$, it follows that $\alpha$ is a
dual vector.
\end{proof}
\begin{corollary}
\label{cor:zerosum1}%
For every Nash equilibrium $\sigma$ of a zero-sum game, there
exists a dual reduction in which the reduced
set of strategy profiles is the singleton $\{\sigma\}$.%
 \end{corollary}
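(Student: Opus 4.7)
The plan is to use Proposition \ref{prop:zerosum1} directly by choosing the simplest possible deviation plan that routes every pure strategy to the equilibrium mixed strategy. Given a Nash equilibrium $\sigma=(\sigma_1,\sigma_2)$ of the zero-sum game, both $\sigma_1$ and $\sigma_2$ are optimal strategies. I would therefore define $\alpha$ by
\[
\alpha_i \ast c_i := \sigma_i \quad \forall i \in \{1,2\}, \forall c_i \in C_i.
\]
Since every $\alpha_i \ast c_i$ is an optimal strategy, Proposition \ref{prop:zerosum1} immediately gives that $\alpha$ is a dual vector.

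Next I would identify $C_i/\alpha_i$ by analyzing the induced Markov chain on $C_i$. Since every state $c_i$ transitions according to the distribution $\sigma_i$, any pure strategy $c_i \notin \mathrm{supp}(\sigma_i)$ is visited with probability zero after one step, hence is transient. On the other hand, $\mathrm{supp}(\sigma_i)$ is absorbing (from any $c_i \in \mathrm{supp}(\sigma_i)$ the image $\sigma_i$ has support in $\mathrm{supp}(\sigma_i)$) and no proper subset is absorbing (the image $\sigma_i$ would have mass outside). Thus $\mathrm{supp}(\sigma_i)$ is the unique minimal absorbing set. A quick check shows $\alpha_i \ast \sigma_i = \sum_{c_i} \sigma_i(c_i)\, \sigma_i = \sigma_i$, so $\sigma_i$ is the (necessarily unique) $\alpha_i$-stationary strategy supported on it. Therefore $C_i/\alpha_i=\{\sigma_i\}$ for $i=1,2$, which yields $C/\alpha=\{\sigma\}$ as required.

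There is no serious obstacle here: the construction is essentially forced once one notices that Proposition \ref{prop:zerosum1} lets one take $\alpha_i \ast c_i$ to be any optimal strategy, and taking it constantly equal to $\sigma_i$ is the choice that collapses the Markov chain to a single stationary distribution. The only mild point to handle carefully is to verify that the support of $\sigma_i$ really is the unique minimal absorbing set of the Markov chain, which follows from the fact that the transition out of every state is exactly the distribution $\sigma_i$.
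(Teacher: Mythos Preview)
Your proof is correct and follows exactly the same approach as the paper: define $\alpha_i \ast c_i = \sigma_i$ for all $c_i$, invoke Proposition~\ref{prop:zerosum1}, and observe that the only $\alpha_i$-stationary strategy is $\sigma_i$. You simply spell out the Markov-chain verification in more detail than the paper does.
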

\begin{proof}
In the particular case of proposition \ref{prop:zerosum1}
where $\alpha_i \ast c_i=\sigma_i$ for every player $i$ and every
$c_i$ in $C_i$, the only $\alpha_i$-stationary strategy is
$\sigma_i$. Therefore, the reduced set $C_i/\alpha_i$ of pure
strategies of player $i$ is the singleton $\{\sigma_i\}$.
\end{proof}
\begin{proposition}
\label{prop:zerosum2} %
Consider a two-player zero-sum game. In all full dual reductions, for every player, all pure strategies with positive marginal probability in at least one correlated equilibrium
are grouped together and all other pure strategies are eliminated.
\end{proposition}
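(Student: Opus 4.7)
The plan is to split the statement into two claims: (i) every pure strategy in $C_i \setminus C_i^c$ is eliminated, and (ii) every pure strategy in $C_i^c$ belongs to a common minimal absorbing set of the Markov chain induced by $\alpha_i$. Claim (i) is immediate from Proposition \ref{prop:uncoherent}. For claim (ii), Proposition \ref{prop:M-comp-dv} reduces everything to characterizing jeopardization in two-player zero-sum games, and I will aim to prove: \emph{if $c_i \in C_i^c$, then $d_i$ jeopardizes $c_i$ if and only if $d_i \in C_i^c$.} Once established, this shows that the transition matrix of the Markov chain restricted to $C_i^c$ is strictly positive everywhere, and that no mass ever leaves $C_i^c$; hence $C_i^c$ is a single communicating class that is closed under the chain, i.e., a single minimal absorbing set.

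The key lemma to prove first is the structural fact that, in any correlated equilibrium $\mu$ of a two-player zero-sum game with value $v$, and for every $c_i$ in the support of the marginal $\mu_i$, the conditional distribution $\mu(\,\cdot \mid c_i)$ is an optimal strategy of player $-i$. The argument runs as follows. First, $E_\mu[U_i] = v$: player $i$ can deviate unilaterally to the constant map onto any maximin strategy, which guarantees at least $v$ against any opponent behavior, so the correlated equilibrium property yields $E_\mu[U_i] \geq v$; the symmetric statement for player $-i$ gives $E_\mu[U_i] \leq v$. Second, the incentive constraint at $c_i$ says that $c_i$ is a best response to $\mu(\,\cdot \mid c_i)$, hence $\max_{d_i} U_i(\mu(\,\cdot \mid c_i), d_i) = U_i(\mu(\,\cdot \mid c_i), c_i)$. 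Averaging against $\mu_i$ gives $v$ on the right-hand side, while each term $\max_{d_i} U_i(\tau, d_i) \geq v$ for any mixed $\tau$ of player $-i$, with equality iff $\tau$ is optimal. Each conditional $\mu(\,\cdot \mid c_i)$ must therefore be optimal.

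Given this lemma, the two directions of the jeopardization characterization follow. If $d_i \in C_i^c$, then $d_i$ lies in the support of some optimal strategy of player $i$, and the standard support-lemma for zero-sum games shows $d_i$ is a best response to every optimal strategy of player $-i$; applied to $\mu(\,\cdot \mid c_i)$, this gives $U_i(\mu(\,\cdot \mid c_i), d_i) = v = U_i(\mu(\,\cdot \mid c_i), c_i)$, so the jeopardization equation holds in every correlated equilibrium. Conversely, if $d_i \notin C_i^c$, then some optimal strategy $\tau$ of player $-i$ fails to make $d_i$ a best response, i.e., $U_i(\tau, d_i) < v$; picking any optimal $\sigma$ of player $i$ with $\sigma(c_i) > 0$ (which exists since $c_i \in C_i^c$), the product $\mu = \sigma \otimes \tau$ is a correlated equilibrium (in zero-sum games, any product of optimal strategies is one), and the jeopardization equation fails strictly at $(c_i, d_i)$. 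Combined with paragraph one, this completes the proof.

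The main obstacle is the conditional-is-optimal lemma; once it is cleanly stated and proved, the jeopardization characterization and the Markov-chain conclusion are purely routine.
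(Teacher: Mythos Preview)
Your argument is correct. Both you and the paper split off claim (i) via Proposition~\ref{prop:uncoherent}, but the route to claim (ii) differs.

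The paper does not verify the jeopardization identity directly. Instead it quotes Forges (1990) for the existence of an optimal strategy $\sigma_i^{*}$ of player $i$ with support exactly $C_i^c$, then applies Proposition~\ref{prop:zerosum1} to the profile $\alpha_i\ast c_i=\sigma_i^{*}$ (for all $c_i$) to obtain an explicit dual vector with $\alpha_i(d_i\mid c_i)>0$ for every $d_i\in C_i^c$; Proposition~\ref{prop:M-comp-dv} then forces every such $d_i$ to jeopardize $c_i$. So the paper goes through the dual side (construct a dual vector, infer jeopardization), whereas you go through the primal side (analyze every correlated equilibrium, check tightness). Your ``conditional-is-optimal'' lemma is essentially the content of the Forges reference, which you re-derive rather than cite.

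Your approach buys self-containment and a slightly stronger conclusion: you establish the full biconditional (for $c_i\in C_i^c$, $d_i$ jeopardizes $c_i$ iff $d_i\in C_i^c$), which shows directly that $C_i^c$ is absorbing under any full dual vector. The paper only proves the ``if'' direction and then argues indirectly that $C_i^c$ must be the unique minimal absorbing set because everything outside is already known to be transient. The paper's route is shorter and more modular, reusing Proposition~\ref{prop:zerosum1} and the Forges citation rather than reproving them.
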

\begin{proof}
Let $C_i^c$ denote the set of pure strategies of player $i$ with positive probability in at least one correlated equilibrium. Consider a full dual reduction. The fact that strategies in $C_i\backslash C_i^c$ are eliminated follows from proposition \ref{prop:uncoherent}. Furthermore, it follows from (Forges, 1990) that player $i$ has an optimal strategy with support $C_i^c$. Due to propositions \ref{prop:zerosum1} and \ref{prop:M-comp-dv}, this implies that all strategies in $C_i^c$ jeopardize each other. Therefore, in a full dual reduction, they are either all eliminated or all grouped together. The first case is ruled out because all other strategies are eliminated. This completes the proof.
\end{proof}
\subsubsection{Games with a unique correlated equilibrium}
\label{sec:uncor}
If $\Gamma$ has a unique Nash equilibrium $\sigma$, then any iterative
dual reduction
 of $\Gamma$ has a unique Nash equilibrium, which induces
$\sigma$ in $\Gamma$; but the strategy space need not be reducible to
$\sigma$. In particular, it may be that a (nontrivial) game has a
unique, pure Nash equilibrium but is nevertheless elementary,
hence cannot be reduced. This is the case in example 4 of Nau and McCardle (1990).
By contrast:
\begin{proposition}
\label{prop:uniquecoreq} Assume that $\Gamma$ has a unique
correlated equilibrium $\sigma$ (note that $\sigma$ is then a Nash
equilibrium, hence may be seen as a mixed strategy profile).
In any full or elementary iterative dual reduction, the set of pure strategy profiles is reduced to $\{\sigma\}$.
In particular, $\Gamma$ has a unique full dual reduction.
\end{proposition}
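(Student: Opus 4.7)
\textbf{Proof plan for Proposition \ref{prop:uniquecoreq}.} The approach is to treat the two types of reductions separately, but in both cases exploit the fact that $\sigma$ is forced to be a Nash equilibrium (since a unique correlated equilibrium is always a Nash equilibrium) and that every pure strategy $c_i$ with $\sigma_i(c_i) > 0$ must be a best response to $\sigma_{-i}$.

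First I handle the elementary iterative case. Let $\Gamma^{e}$ be an elementary iterative dual reduction of $\Gamma$. By definition $\Gamma^{e}$ has a strict correlated equilibrium with full support, and this equilibrium induces a correlated equilibrium of $\Gamma$. By uniqueness in $\Gamma$, this induced equilibrium is $\sigma$. Since the map $\mu \mapsto \bar\mu$ is injective, $\Gamma^{e}$ itself has a unique correlated equilibrium; combined with the elementary character of $\Gamma^{e}$ (which, as recalled in the excerpt, forces either infinitely many correlated equilibria or a unique strategy profile), this forces $\Gamma^{e}$ to have a single pure strategy profile. That single profile then induces $\sigma$ back in $\Gamma$, as required.

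Next I handle full dual reductions. Let $\alpha$ be a full dual vector and $C_i^c$ the set of pure strategies of player $i$ with positive marginal probability in at least one correlated equilibrium; since $\sigma$ is the only correlated equilibrium, $C_i^c$ is exactly the support of $\sigma_i$. By Proposition \ref{prop:uncoherent}, every $c_i \notin C_i^c$ is eliminated in the full dual reduction $\Gamma/\alpha$, so only strategies in $C_i^c$ survive. Next I show that every two strategies $c_i, d_i \in C_i^c$ jeopardize each other. Indeed, both are in the support of the Nash equilibrium $\sigma$, so both are best responses to $\sigma_{-i}$, giving $U_i(\sigma_{-i},d_i) = U_i(\sigma_{-i},c_i)$; multiplying by $\sigma_i(c_i) > 0$ and using the product form of $\sigma$ gives
\[
\sum_{c_{-i} \in C_{-i}} \sigma(c_{-i},c_i)\bigl[U_i(c_{-i},d_i) - U_i(c_{-i},c_i)\bigr] = 0,
\]
which, since $\sigma$ is the unique correlated equilibrium, is exactly the jeopardization condition.

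By Proposition \ref{prop:M-comp-dv} applied to the full dual vector $\alpha$, this yields $\alpha_i(d_i \mid c_i) > 0$ for every pair $c_i, d_i \in C_i^c$, so the Markov chain induced by $\alpha_i$ has $C_i^c$ as its unique minimal absorbing set. Hence $C_i/\alpha_i = \{\hat\sigma_i\}$ for a single mixed strategy $\hat\sigma_i$ with support $C_i^c$, and $\Gamma/\alpha$ has the single pure strategy profile $\hat\sigma = (\hat\sigma_i)_{i \in N}$. Being the only profile, $\hat\sigma$ is trivially a Nash equilibrium of $\Gamma/\alpha$, hence by Myerson's main result a Nash equilibrium of $\Gamma$; by uniqueness it equals $\sigma$. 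This simultaneously identifies the reduced strategy set with $\{\sigma\}$ and, because $\hat\sigma$ does not depend on the choice of full dual vector, shows that $\Gamma$ has a unique full dual reduction. The only non-mechanical step is verifying the pairwise jeopardization of strategies in $C_i^c$; everything else is an application of already established results.
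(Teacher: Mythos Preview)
Your proof is correct and follows essentially the same approach as the paper. For the elementary iterative case you reproduce exactly the argument the paper points to in Section~\ref{sec:app}; for the full case you verify explicitly that all strategies in $C_i^c$ jeopardize each other (which the paper calls ``trivial'') and then carry out the same steps as in the proof of Proposition~\ref{prop:zerosum2}, to which the paper simply refers.
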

\begin{proof}
The part of the proposition concerning elementary iterative dual reductions follows from our elementary proof of the fact that a unique correlated equilibrium is a Nash equilibrium (section \ref{sec:app}).
For full dual reductions, let $C_i^c$ denote the set of pure strategies of player $i$ with positive probability in at least one correlated equilibrium.
Since there is a unique correlated equilibrium, which is thus a Nash equilibrium, all strategies in $C_i^c$ trivially jeopardize each other. Then apply the same argument as in the proof of proposition \ref{prop:zerosum2}.
\end{proof}

\subsubsection{Games with symmetries}
\label{sec:sym}
Let $\Gamma$ be a two-player symmetric game. That is, $C_1=C_2=\{1,2,...,m\}$ and for all $(k,l)$ in $\{1,2,...,m\}^2$, $U_1(k,l)=U_2(l,k)$. If $\alpha=(\alpha_1,\alpha_2)$ is a dual vector, then so are $\alpha'=(\alpha_2,\alpha_1)$ and $\bar{\alpha}=(\alpha + \alpha')/2$. Moreover, $\Gamma/\bar{\alpha}$ is symmetric. This section shows that, more generally, if a game has some symmetries (e.g. cyclic symmetry), then it may be reduced in a way which respect these symmetries.

Let $P$ be a set of permutations of the set of players. Let $\Gamma$ be
a game for which, for every $i$ in $N$ and every
permutation $p$ in $P$, player $i$ and player $p(i)$ have the same set of pure strategies. For every $c$ in $C$ and every permutation $p$ in $P$, define the pure strategy profile $c^p$ by $c^p_{p(i)}=c_i$ for every $i$ in $N$.
\begin{definition}
The game $\Gamma$ is \emph{$p$-symmetric} if, for all $i$ in $N$, $U_{p(i)}(c^p)=U_i(c)$. It is \emph{$P$-symmetric} if it is $p$-symmetric for
every $p$ in $P$.
\end{definition}
\begin{proposition}
\label{prop:symgames} If $\Gamma$ is $P$-symmetric
then there exists a strong and full dual vector $\alpha$ such that
$\Gamma/\alpha$ is $P$-symmetric.
\end{proposition}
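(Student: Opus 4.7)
\medskip

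\textbf{Proof plan.} My plan is to exploit the convexity of the set of strong and full dual vectors together with the averaging trick standard for symmetric optimization problems. First I observe that, because composition of symmetries is again a symmetry (a short check: $U_{qp(i)}(c^{qp})=U_{p(i)}(c^p)=U_i(c)$), the set of permutations under which $\Gamma$ is invariant is a group, so I may assume without loss of generality that $P$ itself is a finite group. Start with any strong and full dual vector $\alpha$, which exists by the remarks following the definition of strong dual vectors.

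For each $p\in P$, I permute $\alpha$: since $p$-symmetry of $\Gamma$ forces $C_{p(i)}=C_i$ for every $i$, I may define $\alpha^p$ by $\alpha^p_{p(i)}=\alpha_i$, i.e.\ $\alpha^p_j=\alpha_{p^{-1}(j)}$. The key computation is
\[ D(c^p,\alpha^p) \;=\; \sum_{i\in N}\bigl[U_{p(i)}((c^p)_{-p(i)},\alpha_i\ast c_i) - U_{p(i)}(c^p)\bigr] \;=\; \sum_{i\in N}\bigl[U_i(c_{-i},\alpha_i\ast c_i) - U_i(c)\bigr] \;=\; D(c,\alpha),\]
where the middle equality uses the $p$-symmetry of $U$ extended by linearity to mixed strategies in the $i$-th component. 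Since $c\mapsto c^p$ is a bijection of $C$ and since $c$ has probability zero in every correlated equilibrium if and only if $c^p$ does (also by $p$-symmetry), this identity shows that $\alpha^p$ is again strong. A parallel check that $p$-symmetry preserves the jeopardization relation gives that $\alpha^p$ is also full.

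I then set $\bar\alpha=\frac{1}{|P|}\sum_{p\in P}\alpha^p$. Being a convex combination of strong and full dual vectors, $\bar\alpha$ is itself strong and full (these two conditions define a convex set, namely the relative interior of the polytope of dual vectors). To check $P$-invariance, I use $(\alpha^p)^q=\alpha^{qp}$: since $P$ is a group, $qp$ runs over $P$ as $p$ does, so $\bar\alpha^q=\bar\alpha$ for every $q\in P$. Unpacking the definition of $\bar\alpha^q$, this says $\bar\alpha_{q(i)}=\bar\alpha_i$ for all $i\in N$ and $q\in P$ (under the identification $C_i=C_{q(i)}$).

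Finally I conclude that $\Gamma/\bar\alpha$ is $P$-symmetric. Indeed, the identification $C_i=C_{q(i)}$ identifies the Markov chains induced by $\bar\alpha_i$ and $\bar\alpha_{q(i)}$, hence identifies the sets $C_i/\bar\alpha_i$ and $C_{q(i)}/\bar\alpha_{q(i)}$ of stationary strategies on minimal absorbing sets; and the $q$-symmetry of utilities on $\Gamma$, extended by multilinearity to products of mixed strategies, transfers directly to these reduced strategy sets. The main obstacle is purely bookkeeping: keeping track of how $p$ acts on the indexing of strategies so that the identity $D(c^p,\alpha^p)=D(c,\alpha)$ and the group identity $(\alpha^p)^q=\alpha^{qp}$ come out with the correct side of composition; once this is done carefully, the strongness, fullness, and symmetry properties all fall out of the averaging.
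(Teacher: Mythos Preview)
Your proposal is correct and follows essentially the same route as the paper: both take a strong and full dual vector $\alpha$, define $\alpha^p$ by $\alpha^p_{p(i)}=\alpha_i$, average over $P$ to obtain a $P$-invariant $\bar\alpha$, and deduce that $\Gamma/\bar\alpha$ is $P$-symmetric. The only cosmetic difference is that the paper replaces $P$ by the \emph{maximal} symmetry set (hence a group) whereas you close $P$ under composition; your version is in fact more careful, since you explicitly verify via $D(c^p,\alpha^p)=D(c,\alpha)$ that each $\alpha^p$ is a strong dual vector and argue separately for fullness, points the paper asserts without justification.
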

\begin{proof}
Without loss of generality, assume that $P$ is the largest set of
permutations such that $\Gamma$ is $P$-symmetric. Note that for any permutations $p$ and $\tilde{p}$, if $\Gamma$ is both $p$- and $\tilde{p}$-symmetric then it is $\tilde{p} \circ p$-symmetric. Therefore, by maximality of $P$,
\begin{equation}
\label{eq:PInv} \forall \tilde{p} \in P, \{\tilde{p} \circ p,
p \in P\}=P
\end{equation}
Let $\alpha$ be a strong and full dual vector. For
every permutation $p$ in $P$, define the dual vector $\alpha^p$ by $\alpha^p_{p(i)}=\alpha_i$ for every $i$ in $N$. Let
$$\bar{\alpha}=\frac{\sum_{p \in P} \alpha^p}{|P|}$$
Let $p$ in $P$. Due to (\ref{eq:PInv}), $\bar{\alpha}^p=\bar{\alpha}$, so that for every $i$ in $N$, $\bar{\alpha}_{p(i)}=\bar{\alpha}^p_{p(i)}=\bar{\alpha}_i$.
It follows that for any $i$ in $N$, $C_{p(i)}/\bar{\alpha}_{p(i)}=C_i/\bar{\alpha}_i$ and that $\Gamma/\bar{\alpha}$ is $p$-symmetric. Therefore $\Gamma/\bar{\alpha}$ is $P$-symmetric.
Finally, since $\alpha$ is a strong and full dual vector, so is $\bar{\alpha}$. This concludes the proof.
\end{proof}
\subsubsection{Generic $2 \times 2$ games}
\label{sec:gen}
Let $\Gamma$ be a $2 \times 2$ game such that
$$c \neq c' \Rightarrow U_i(c) \neq U_i(c'), \hspace{0.5cm}
\forall (c,c') \in C^2, \forall i \in \{1,2\}$$
It is well known that such a game is either elementary, in which case it cannot be reduced, or has a unique correlated equilibrium distribution, in which case proposition \ref{prop:uniquecoreq} applies. The first case corresponds to coordination-like games, with three Nash equilibria: two pure and one completely mixed; the second case to games with either a
dominating strategy or a unique, completely mixed Nash equilibrium. The games in Myerson's figures 3 and 5 are instances of the second case.

\subsection{Uniqueness of the reduction process}
\label{sec:uniqueness}
Let $\Gamma$ denote the rather trivial game:
$$
\begin{array}{ccc}
                & x_2 &  y_2\\
            x_1 & 1, 1 & 0,1 \\
\end{array}
$$
Let $\epsilon \in ]0,1[$. Define the full dual vector $\alpha^{\epsilon}$ by $\alpha^{\epsilon}_2 \ast x_2 = \alpha^{\epsilon}_2 \ast y_2 = \epsilon x_2 + (1-\epsilon) y_2$. In the full dual reduction $\Gamma/\alpha_{\epsilon}$, there is a unique pure strategy profile whose payoffs $(\epsilon, 1)$ depend on $\epsilon$. Thus, even if only full dual reductions are used, there might still be multiple ways to reduce a game. Other examples (omitted) suggest however that multiplicity of full dual reductions typically arises when a player is indifferent between some of his strategies, or becomes so after elimination of strategies of the other players. Such indifference is a non-generic phenomenon and we show below that almost all two-player games have a unique sequence of iterative full dual reductions. We first show that there are severe restrictions on the ways strategies may be grouped together.

{\bf Notation}: for all $i$ in $N$, let $B_i \subset C_i$ and let $B=\times_{i \in N} B_i$.
We denote by $\Gamma_B=(N,(B_i)_{i \in N},(U_i)_{i \in N})$ the game obtained from $\Gamma$ by
reducing the pure strategy set of player $i$ to $B_i$, for all $i$ in $N$.
\begin{proposition}
\label{prop:Nashonblocks}%
Let $\alpha$ be a dual vector. For each $i$ in $N$, let $B_i
\subset C_i$ denote a minimal $\alpha_i$-absorbing set and
$B=\times_{i \in N} B_i$. Let $\sigma_{B_i}$ denote the unique
$\alpha_i$-stationary strategy of player $i$ with support in $B_i$
and
$\sigma_B=( \sigma_{B_i})_{i \in N}$.
We have: $\sigma_B$ is a completely mixed Nash equilibrium of $\Gamma_B$.
\end{proposition}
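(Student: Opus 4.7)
The plan is to combine the variance identity (\ref{eq:rev-variant-lemma1}) with the irreducibility of the Markov chain that $\alpha_i$ induces on the minimal absorbing set $B_i$.

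First I would note that $\sigma_B$ is completely mixed in $\Gamma_B$ as a matter of Markov chain theory: restricted to the minimal absorbing set $B_i$, the chain induced by $\alpha_i$ is irreducible, so its unique stationary distribution $\sigma_{B_i}$ has full support on $B_i$. Hence $\sigma_B(c) > 0$ for every $c \in B$.

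The crucial step is to show that $D(c,\alpha) = 0$ on the block $B$. Apply (\ref{eq:rev-variant-lemma1}) with $\sigma = \sigma_B$: since every $\sigma_{B_j}$ is $\alpha_j$-stationary, the right-hand side vanishes, giving
\[
\sum_{c \in B} \sigma_B(c)\, D(c,\alpha) = 0.
\]
Combined with $D(c,\alpha) \geq 0$ (definition of a dual vector) and $\sigma_B(c) > 0$ on $B$, this forces $D(c,\alpha) = 0$ for every $c \in B$.

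Next, fix a player $i$ and a pure strategy $b_i \in B_i$, and consider $\sigma = (\sigma_{B_{-i}}, b_i)$. Its support still lies in $B$, so by the previous step $\sum_c \sigma(c) D(c,\alpha) = 0$. Since $\sigma_{B_j}$ is $\alpha_j$-stationary for every $j \neq i$, equation (\ref{eq2:rev-variant-lemma1}) applies and yields
\[
U_i(\sigma_{B_{-i}}, \alpha_i \ast b_i) - U_i(\sigma_{B_{-i}}, b_i) = 0.
\]
Writing $f(b_i) := U_i(\sigma_{B_{-i}}, b_i)$ and noting that $\alpha_i \ast b_i$ is supported in $B_i$ (because $B_i$ is absorbing), this rewrites as
\[
f(b_i) = \sum_{d_i \in B_i} \alpha_i(d_i \mid b_i)\, f(d_i) \quad \text{for every } b_i \in B_i.
\]
Hence $f$ is harmonic for the restriction of the chain to $B_i$. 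As that chain is irreducible, harmonic functions are constant, so $f$ is constant on $B_i$.

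Constancy of $f$ means that every pure strategy in $B_i$ yields the same payoff against $\sigma_{B_{-i}}$ in $\Gamma_B$, so $\sigma_{B_i}$, being a convex combination of them, is a best response in $\Delta(B_i)$. Applying this to every player gives that $\sigma_B$ is a Nash equilibrium of $\Gamma_B$, and by the first paragraph it is completely mixed. The main conceptual point, and the only step that is more than routine, is the harmonicity-plus-irreducibility argument; everything else is direct bookkeeping with the identities already established in Section \ref{sec:basics}.
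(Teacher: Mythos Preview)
Your proof is correct, but it takes a different route from the paper's. The paper argues more abstractly: it restricts $\alpha$ to a profile $\alpha'$ of deviation plans on the block $B$, observes that $\alpha'$ is a dual vector of the subgame $\Gamma_B$ (since $D_{\Gamma_B}(c,\alpha')=D(c,\alpha)\geq 0$ for $c\in B$), notes that $\Gamma_B/\alpha'$ has the single strategy profile $\sigma_B$, and then invokes Myerson's main theorem (that Nash equilibria of a dual reduction are Nash equilibria of the original game) to conclude. Your argument instead unpacks everything at the level of the identities (\ref{eq:rev-variant-lemma1})--(\ref{eq2:rev-variant-lemma1}): you first pin down $D(c,\alpha)=0$ on $B$, then use the resulting equality in (\ref{eq2:rev-variant-lemma1}) to see that the best-response payoff function $f$ is harmonic for the irreducible chain on $B_i$, hence constant. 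The paper's approach is shorter and shows how the result sits inside the general dual-reduction machinery; your approach is more self-contained, avoids citing Myerson's theorem as a black box, and makes explicit the mechanism (harmonicity plus irreducibility) that actually forces indifference among the strategies in $B_i$.
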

\begin{proof}
The proof draws on the remark made by Myerson at the end of the proof of his lemma 2. Since $B_i$ is $\alpha_i$-absorbing, we may define $\alpha'_i : B_i \to \Delta(B_i)$ by $\alpha'_i \ast c_i =\alpha_i \ast c_i$ for all $c_i$ in $B_i$. Since $\alpha$ is a dual vector of $\Gamma$, it follows that $\alpha'$ is a dual vector of $\Gamma_B$.  Moreover, $B/\alpha'=\{\sigma_B\}$, hence $\sigma_B$ is a Nash equilibrium of $\Gamma_B/\alpha'$. This implies that $\sigma_B$ is a Nash equilibrium of $\Gamma_B$. Finally, by minimality of $B_i$, the support of $\sigma_{B_i}$ is exactly $B_i$ 
so $\sigma_B$ is completely mixed.
\end{proof}
\begin{corollary}
\label{cor:Nash1}
Assume that for every product $B= \times_{i \in N} B_i$ of subsets $B_i$ of
$C_i$, $\Gamma_B$ has at most one completely mixed Nash
equilibrium. Then $\Gamma$ has a  unique full dual reduction.
\end{corollary}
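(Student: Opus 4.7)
The approach combines two ingredients: (i) the fact, stated earlier in the paper, that any two full dual vectors share the same positive components and hence induce the same collection of minimal absorbing sets on each $C_i$; and (ii) Proposition \ref{prop:Nashonblocks}, which pins down the stationary distributions arising in a dual reduction as completely mixed Nash equilibria of suitable restricted games $\Gamma_B$.

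First I would record the observation that a dual reduction $\Gamma/\alpha$ is entirely specified by two pieces of data: for each player $i$, the partition of $C_i$ into minimal $\alpha_i$-absorbing sets (together with the transient states), and for each minimal absorbing set $B_i$, the unique $\alpha_i$-stationary strategy $\sigma_{B_i}$ supported on $B_i$. As recalled above, the first piece of data is already known to be the same across all full dual vectors; the heart of the proof is to show that the second piece is also determined by $\Gamma$ alone.

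To this end, fix a full dual vector $\alpha$, a player $i$ and a minimal $\alpha_i$-absorbing set $B_i \subset C_i$ (which, by (i), is common to all full dual vectors). For every other player $j$, pick any minimal $\alpha_j$-absorbing set $B_j \subset C_j$, and set $B = \times_{j \in N} B_j$. By Proposition \ref{prop:Nashonblocks}, the tuple $(\sigma_{B_j})_{j \in N}$ is a completely mixed Nash equilibrium of $\Gamma_B$. The hypothesis of the corollary then forces this tuple to be the unique completely mixed Nash equilibrium of $\Gamma_B$; in particular its $i$-th coordinate $\sigma_{B_i}$ is determined by $\Gamma$ and $B$, and does not depend on the chosen full dual vector $\alpha$.

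Letting $i$ and $B_i$ range over all players and minimal absorbing sets, I conclude that every stationary strategy appearing in a full dual reduction of $\Gamma$ is uniquely pinned down. Together with the invariance of the collection of minimal absorbing sets, this shows that $C_i/\alpha_i$ is the same for every full dual vector $\alpha$, so $\Gamma/\alpha$ is unique. There is no serious obstacle; the one small point worth flagging is that the choice of the auxiliary $B_j$ for $j \neq i$ is immaterial, because any such choice produces a game $\Gamma_B$ in which $\sigma_{B_i}$ must arise as the $i$-th coordinate of the (assumed) unique completely mixed equilibrium.
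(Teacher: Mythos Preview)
Your argument is correct and follows essentially the same route as the paper's proof: both rely on the invariance of the minimal absorbing sets across full dual vectors and on Proposition~\ref{prop:Nashonblocks} to identify the stationary strategies with completely mixed Nash equilibria of $\Gamma_B$, then invoke the uniqueness hypothesis. The paper's version is organized slightly differently---it compares two full dual vectors $\alpha,\alpha'$ directly by taking a profile $\sigma\in C/\alpha$, setting $B_i$ to be the support of $\sigma_i$, and producing the corresponding $\tau\in C/\alpha'$ with the same supports, then concluding $\sigma=\tau$ from the uniqueness of the completely mixed equilibrium of $\Gamma_B$---whereas you work coordinate by coordinate; but the substance is the same. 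Your closing remark about the immateriality of the auxiliary $B_j$'s is harmless but unnecessary: $\sigma_{B_i}$ is by definition the $\alpha_i$-stationary distribution on $B_i$ and does not depend on the other $B_j$'s at all, so any single choice already gives the conclusion.
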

\begin{proof}
Let $\alpha$ and $\alpha'$ be two full dual vectors. Let $\sigma \in C/\alpha$.
Let $B_i$ denote the support of $\sigma_i$ (seen as an element of $\Delta(C_i)$)
and let $B=\times_{i\in N} B_i$. Note that, as the support of
an $\alpha_i$-stationnary strategy, $B_i$ is a minimal $\alpha_i$-absorbing set.
Since full dual vectors have the same minimal absorbing sets, it follows that $B_i$ is also a minimal $\alpha'_i$-absorbing set.
Therefore, there exists $\tau$ in $C/\alpha'$ such that $\tau_i$ has support $B_i$,
for all $i$ in $N$. By proposition \ref{prop:Nashonblocks}, both $\sigma$ and $\tau$ are
completely mixed Nash equilibria of $\Gamma_B$. By assumption, this implies
$\sigma=\tau$, hence $\sigma \in C'/\alpha$. Therefore $C/\alpha =C/\alpha'$. \end{proof}

In the remainder of this section, $\Gamma$ is a two-player game. Consider the following conditions (which are satisfied by almost all two-player games) :

(a) for all Nash equilibria $\sigma$, the supports of $\sigma_1$ and $\sigma_2$ have the same number of elements\footnote{Any game which is nondegenerate in the sense of von Stengel (2002, def. 2.6 and thm 2.10)
satisfies this condition.}

(b) any game obtained from $\Gamma$ by deleting some pure strategies satisfies (a)

(c) for any $i$ in $\{1,2\}$, for any $B_i \subset C_i$, $B_{-i} \subset C_{-i}$ and $B'_{-i} \subset C_{-i}$,
with $|B_i|=|B_{-i}|=|B'_{-i}| \geq 2$ and $B_{-i} \cap B'_{-i}=\emptyset$, we have:
if $\sigma$ and $\sigma'$ are completely mixed Nash equilibria of $\Gamma_{B_i \times B_{-i}} $
and $\Gamma_{B_i \times B'_{-i}}$, respectively, then $\sigma_i \neq \sigma'_i$.
That is, the same mixed strategy of player $i$ cannot be the $i$-component of a completely mixed Nash equilibrium both in $\Gamma_{B_i \times B_{-i}} $ and in $\Gamma_{B_i \times B'_{-i}}$.
\begin{proposition}
\label{prop:uniquefdr} If $\Gamma$ satisfies condition (b), then $\Gamma$ has a unique full dual reduction.
\end{proposition}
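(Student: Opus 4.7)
The plan is to invoke Corollary \ref{cor:Nash1}: it suffices to show that for every product $B=B_1 \times B_2$ of subsets $B_i \subseteq C_i$, the restricted game $\Gamma_B$ has at most one completely mixed Nash equilibrium. Condition (b) is exactly what lets us apply (a) to every such $\Gamma_B$, since $\Gamma_B$ is obtained from $\Gamma$ by deleting strategies.

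I will proceed by contradiction. Suppose $\Gamma_B$ admits two distinct completely mixed Nash equilibria $\sigma$ and $\sigma'$, both with support $B_1 \times B_2$. Applying (a) to $\Gamma_B$ forces $|B_1|=|B_2|$. Without loss of generality $\sigma_1 \neq \sigma'_1$, and I will consider the one-parameter family
\[\sigma_t := \bigl((1-t)\sigma_1+t\sigma'_1,\, \sigma_2\bigr),\]
showing that $\sigma_t$ is a Nash equilibrium of $\Gamma_B$ whenever $\sigma_{t,1}$ is a probability vector. This relies on two observations: first, $\sigma_2$ equalizes player $1$ over all of $B_1$ (because $\sigma$ is a Nash equilibrium with support $B_1$); second, each of $\sigma_1,\sigma'_1$ equalizes player $2$ over $B_2$ at some values $v_2,v'_2$, so by linearity $\sigma_{t,1}$ equalizes player $2$ over $B_2$ at value $(1-t)v_2+tv'_2$. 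Hence every strategy in $\mathrm{supp}(\sigma_{t,1})$ and in $B_2$ is a best response in $\Gamma_B$.

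Next, since $\sigma_1\neq\sigma'_1$ and both are probability vectors, some $c_1\in B_1$ satisfies $\sigma'_1(c_1)<\sigma_1(c_1)$, so the $c_1$-component $(1-t)\sigma_1(c_1)+t\sigma'_1(c_1)$ becomes negative as $t$ grows; let $t^*>1$ be the smallest $t>1$ at which some component of $\sigma_{t,1}$ vanishes. By continuity $\sigma_{t^*}$ is still a Nash equilibrium of $\Gamma_B$, but now
\[\bigl|\mathrm{supp}(\sigma_{t^*,1})\bigr|<|B_1|=|B_2|=\bigl|\mathrm{supp}(\sigma_{t^*,2})\bigr|,\]
contradicting (a) applied to $\Gamma_B$. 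Hence $\sigma_1=\sigma'_1$, and by a symmetric argument $\sigma_2=\sigma'_2$, so $\sigma=\sigma'$.

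The main delicacy is the verification that $\sigma_t$ remains a Nash equilibrium for all admissible $t$, including at the boundary value $t=t^*$ where one component of player $1$'s strategy has just vanished: the key point is that the equalization of player $2$ over $B_2$ is an affine condition in $\sigma_{t,1}$, hence preserved under convex combinations, while the equalization of player $1$ over $B_1$ is inherited directly from $\sigma$ because $\sigma_2$ is not moved. Once this is granted, Corollary \ref{cor:Nash1} closes the proof.
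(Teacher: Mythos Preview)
Your proof is correct and follows essentially the same route as the paper's: both reduce to Corollary~\ref{cor:Nash1} by showing that two distinct completely mixed Nash equilibria of $\Gamma_B$ would yield, via an affine combination of the player-$1$ components, a Nash equilibrium $(\sigma_1^\lambda,\sigma_2)$ whose support sizes differ, contradicting~(b). The only cosmetic slip is the phrase ``preserved under convex combinations'': since $t^*>1$, the combination $(1-t^*)\sigma_1+t^*\sigma'_1$ is affine but not convex, so ``affine combinations'' is the right word (and indeed you wrote ``affine condition'' just before, so the argument itself is fine).
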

\begin{proof}
It suffices to show that condition (b) implies the assumption of corollary \ref{cor:Nash1}. The proof is by contradiction. Assume that there exists $B = B_1 \times B_2 \subset C_1 \times C_2 $ such that $\Gamma_B$ has  two
distinct completely mixed Nash equilibria $\sigma$ and $\tau$. Without loss of generality, assume $\sigma_1\neq \tau_1$.
There exists $\lambda$ in $\R$ such that $\sigma_1^{\lambda}:=\lambda \sigma_1 + (1-\lambda)\tau_1$ is in $\Delta(C_1)$
but its support is a strict subset of the support of $\sigma_1$. Since $\Gamma_B$ is a bimatrix game
and $\sigma$ and $\tau$ are completely mixed (in $\Gamma_B$), it follows that $(\sigma^{\lambda}_1, \sigma_2)$
is a Nash equilibrium of $\Gamma_B$. But so is $\sigma$. Therefore, for at least one of these equilibria,
the supports of the strategies of the players do not have the same number of elements, hence  condition (b) is not satisfied.
\end{proof}
\begin{proposition}
\label{prop:3poss} If $\Gamma$ satisfies conditions (b) and (c), then there are only three possibilities:
\begin{enumerate}
\item[1]
$\Gamma$ is elementary
\item[2]
In all dual reductions of $\Gamma$, some strategies are eliminated,
but no strategies are grouped together.
\item[3]
In any full dual reduction of $\Gamma$, the reduced strategy space $C/\alpha$ is a singleton.
\end{enumerate}
\end{proposition}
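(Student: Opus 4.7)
The plan is to apply Proposition \ref{prop:uniquefdr} (available because condition (b) holds) to obtain a unique full dual reduction $\Gamma/\alpha$, and then split cases according to whether $\alpha$ groups any strategies together. If $\Gamma$ is elementary, case 1 obtains immediately, so I may assume $\Gamma$ is non-elementary, in which case $\Gamma/\alpha$ is a strict reduction.

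Suppose first that no strategies are grouped in $\Gamma/\alpha$; I claim this forces case 2. The point is that for any dual vector $\beta$ and any $c_i, d_i$ in $C_i$, the inequality $\beta_i(d_i|c_i)>0$ implies (by Proposition \ref{prop:M-comp-dv}) that $d_i$ jeopardizes $c_i$, and hence $\alpha_i(d_i|c_i)>0$ by fullness of $\alpha$. Consequently every positive transition of the $\beta_i$-chain is a positive transition of the $\alpha_i$-chain, so any minimal $\beta_i$-absorbing set is contained in a minimal $\alpha_i$-absorbing set. If some dual vector $\beta$ grouped strategies together, so would $\alpha$. Thus absence of grouping in $\Gamma/\alpha$ propagates to every dual reduction, and case 2 holds.

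Now suppose that $\alpha$ does group strategies, say that player $1$ has a minimal $\alpha_1$-absorbing set $B_1$ with $|B_1|\geq 2$. I plan to show $|C/\alpha|=1$, which is case 3. For any minimal $\alpha_2$-absorbing set $B_2$, write $\sigma_{B_j}$ for the unique $\alpha_j$-stationary strategy with support $B_j$ ($j=1,2$). By Proposition \ref{prop:Nashonblocks}, the profile $(\sigma_{B_1}, \sigma_{B_2})$ is a completely mixed Nash equilibrium of the subgame $\Gamma_{B_1\times B_2}$. Since condition (b) holds, $\Gamma_{B_1\times B_2}$ itself satisfies condition (a), which forces $|B_2|=|B_1|\geq 2$. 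Thus every minimal $\alpha_2$-absorbing set has cardinality exactly $|B_1|\geq 2$.

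The main obstacle is then to show that there is only one such minimal $\alpha_2$-absorbing set. Suppose on the contrary that two distinct ones, $B_2$ and $B_2'$, coexisted; they are automatically disjoint, and by the previous step both have cardinality $|B_1|\geq 2$, so condition (c) with $i=1$ applies to the completely mixed Nash equilibria $(\sigma_{B_1},\sigma_{B_2})$ and $(\sigma_{B_1},\sigma_{B_2'})$ of $\Gamma_{B_1\times B_2}$ and $\Gamma_{B_1\times B_2'}$, forcing their player-$1$ components to differ, contradicting the fact that both equal $\sigma_{B_1}$. Hence $|C_2/\alpha_2|=1$. Since $|B_2|=|B_1|\geq 2$, player $2$ also groups strategies, so the argument is symmetric and yields $|C_1/\alpha_1|=1$, whence $|C/\alpha|=1$. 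The subtle ingredients are Proposition \ref{prop:Nashonblocks} (converting block structure into completely mixed equilibria of subgames) and the careful invocation of conditions (b) and (c) on those subgames; everything else is standard bookkeeping with the fullness property of $\alpha$.
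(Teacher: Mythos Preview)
Your case-3 argument (when the full dual vector $\alpha$ groups strategies) is correct and coincides with the paper's proof. The gap is in your case-2 argument. You assert that because every positive transition of the $\beta_i$-chain is a positive transition of the $\alpha_i$-chain, ``any minimal $\beta_i$-absorbing set is contained in a minimal $\alpha_i$-absorbing set.'' This inference fails: the extra $\alpha_i$-transitions can render a minimal $\beta_i$-absorbing set entirely $\alpha_i$-transient. For instance, with $C_i=\{1,2,3\}$, $\beta_i$ cycling $1\leftrightarrow 2$ and fixing $3$, the set $\{1,2\}$ is minimal $\beta_i$-absorbing; but if $\alpha_i$ has the additional transition $1\to 3$, then $\{3\}$ is the only minimal $\alpha_i$-absorbing set and $\{1,2\}$ lies in the transient part. (The valid general implication goes the other way: any minimal $\alpha_i$-absorbing set, being $\alpha_i$-closed and hence $\beta_i$-closed, contains a minimal $\beta_i$-absorbing set.) Since your case-2 argument invokes neither condition (b) nor (c), a $3\times 1$ game with $U_1(1)=U_1(2)=0$ and $U_1(3)=1$ already shows that the implication you claim can fail for actual dual vectors: the full dual vector has $\{3\}$ as its sole (singleton) absorbing set, yet the dual vector $\beta_1\ast 1=2$, $\beta_1\ast 2=1$, $\beta_1\ast 3=3$ groups $\{1,2\}$.

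The fix is to observe that your case-3 argument works verbatim for an \emph{arbitrary} nontrivial dual vector, not only the full one; this is exactly what the paper proves. Once you know that any dual vector $\beta$ with grouping gives $|C_i/\beta_i|=1$ with the unique minimal $\beta_i$-absorbing set of size at least $2$, case 2 follows: if the full $\alpha$ had only singleton minimal absorbing sets, each such singleton would be $\alpha_i$-closed, hence $\beta_i$-closed, hence a minimal $\beta_i$-absorbing set of size $1$---contradicting the uniqueness and size of $\beta_i$'s minimal absorbing set.
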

\begin{proof}
Assume that $\Gamma$ is not elementary and let $\alpha$ be a nontrivial dual vector. Assume that some
strategies of player 1 (for instance) are grouped together. That is, there exists a minimal
$\alpha_1$-absorbing set $B_1$ with at least two elements.
Let $B_2$ and $B'_2$ be minimal $\alpha_2$-absorbing sets. Let
$\sigma_{B_1}$ denote the $\alpha_1$-stationary strategy with
support in $B_1$. Define $\sigma_{B_2}$ and $\sigma_{B'_2}$
similarly. By proposition \ref{prop:Nashonblocks},
$(\sigma_{B_1}, \sigma_{B_2})$ and $(\sigma_{B_1}, \sigma_{B'_2})$ are
Nash equilibria of $\Gamma_{B_1 \times B_2}$ and $\Gamma_{B_1 \times B'_2}$, respectively.
Due to conditions (b) and (c), this implies $B_2=B'_2$. Therefore, there is a unique minimal $\alpha_2$-absorbing set: $B_2$. That is, $C_2/\alpha_2$ is a singleton.
 Moreover, due to condition (b), $B_1$ and $B_2$ have the same number of elements. Thus $B_2$ has at least two elements.
 Therefore, by the above reasoning, $C_1/\alpha_1$ is also a singleton and we are done.
\end{proof}
If $\Gamma$ satisfies condition (b), then any game obtained by deleting some pure strategies of
$\Gamma$ obviously satisfies condition (b). Therefore, proposition \ref{prop:3poss} implies that:
\begin{corollary}
\label{cor:locgen} If $\Gamma$ satisfies conditions (b) and (c), then so does any dual reduction of $\Gamma$.
\end{corollary}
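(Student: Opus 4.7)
The plan is to let $\alpha$ be any dual vector of $\Gamma$ and show directly that $\Gamma' := \Gamma/\alpha$ satisfies (b) and (c), by splitting on whether $\alpha$ merely deletes pure strategies or actually groups some together. This case split is natural because Proposition~\ref{prop:3poss} already controls exactly these two kinds of behavior, so the heavy lifting has been done and Corollary~\ref{cor:locgen} reduces to assembling observations.

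First I would treat the ``no grouping'' case: suppose every minimal $\alpha_i$-absorbing set is a singleton. Then $C_i/\alpha_i \subset C_i$ for each $i$, and $\Gamma'$ is literally the subgame $\Gamma_{C'}$ of $\Gamma$ where $C'_i = C_i/\alpha_i$. The remark immediately preceding the corollary already observes that (b) is inherited under such deletion. For (c), I would point out that for any $B_i \subset C'_i$ and any $B_{-i}, B'_{-i} \subset C'_{-i}$ satisfying the hypotheses of (c) in $\Gamma'$, the subgames $\Gamma'_{B_i \times B_{-i}}$ and $\Gamma'_{B_i \times B'_{-i}}$ coincide with the corresponding subgames of $\Gamma$ (the utility functions agree); hence the conclusion of (c) for $\Gamma'$ follows from (c) for $\Gamma$.

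Second, in the ``grouping'' case, there exists $i \in \{1,2\}$ and a minimal $\alpha_i$-absorbing set of size at least $2$. Then Proposition~\ref{prop:3poss} (applied to $\Gamma$, which is not elementary since a nontrivial grouping occurs) puts us in its case~3; more precisely, the argument in the proof of that proposition, which does not rely on $\alpha$ being full, shows that $C_1/\alpha_1$ and $C_2/\alpha_2$ are both singletons. Thus $\Gamma'$ has exactly one pure strategy profile. In such a one-profile game, the unique Nash equilibrium has supports of cardinality one for both players, so (a), and a fortiori (b), hold trivially; and (c) is vacuous because no $B_i \subset C'_i$ can satisfy $|B_i| \geq 2$.

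There is no real obstacle: the potential subtlety is verifying that the argument inside the proof of Proposition~\ref{prop:3poss} does not use fullness of the dual vector, so that it can be invoked for an arbitrary $\alpha$ that groups strategies; reading that proof confirms this. Combining the two cases, every dual reduction of $\Gamma$ satisfies (b) and (c), which is the corollary.
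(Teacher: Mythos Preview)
Your proof is correct and follows the same line as the paper's, which is the single sentence preceding the corollary: inheritance of (b) under deletion plus Proposition~\ref{prop:3poss}. You simply spell out what the paper leaves implicit, namely that condition~(c) is likewise inherited under deletion, that a one-profile game vacuously satisfies (b) and (c), and that the argument inside the proof of Proposition~\ref{prop:3poss} applies to an arbitrary (not just full) dual vector that groups strategies.
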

Putting proposition \ref{prop:uniquefdr} and corollary \ref{cor:locgen}  together, we get
\begin{theorem}
If $\Gamma$ satisfies conditions (b) and (c), then $\Gamma$ has a
unique sequence of iterative full dual reductions.
\end{theorem}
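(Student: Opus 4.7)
The proof will be a short induction using the two immediately preceding results. The plan is to observe that Proposition~\ref{prop:uniquefdr} gives uniqueness of the first-step full dual reduction, and that Corollary~\ref{cor:locgen} ensures conditions (b) and (c) are inherited by that reduction, so the same argument applies again at the next step.

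More precisely, let $\Gamma^{0}=\Gamma$ and suppose we have already defined $\Gamma^{0},\Gamma^{1},\ldots,\Gamma^{k}$ where each $\Gamma^{j+1}$ is the (unique) full dual reduction of $\Gamma^{j}$ and each $\Gamma^{j}$ satisfies (b) and (c). By Proposition~\ref{prop:uniquefdr}, $\Gamma^{k}$ has a unique full dual reduction $\Gamma^{k+1}$; by Corollary~\ref{cor:locgen}, $\Gamma^{k+1}$ again satisfies (b) and (c). Thus the sequence $(\Gamma^{k})_{k\geq 0}$ is uniquely determined at every step, which is exactly the statement that $\Gamma$ has a unique sequence of iterative full dual reductions.

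There is essentially no obstacle here beyond assembling the two cited results; the only mildly delicate point is making sure the definition of ``iterative full dual reduction'' is read as ``any sequence $\Gamma/\alpha^{1}/\alpha^{2}/\ldots$ in which each $\alpha^{k}$ is a \emph{full} dual vector of the current reduced game'', so that at every stage one invokes Proposition~\ref{prop:uniquefdr} in the reduced game rather than in $\Gamma$ itself. Termination of the sequence (if one wishes a genuinely finite chain ending at an elementary game) is automatic because each nontrivial full dual reduction strictly decreases $\sum_{i\in N}|C_{i}|$, and any game that cannot be further reduced is elementary by Myerson's characterization recalled in the introduction.
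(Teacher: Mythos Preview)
Your proposal is correct and follows exactly the approach of the paper: the paper's proof consists of the single sentence ``Putting proposition~\ref{prop:uniquefdr} and corollary~\ref{cor:locgen} together, we get'' the theorem, and your induction simply makes this combination explicit. The remark on termination is extra but harmless.
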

\begin{appendix}
\section{Proof of proposition \ref{prop:pretight}}
Let $\Gamma$ be pretight. Let $\alpha$ be a strong and full dual vector. Note that by definition of pretight games, for any $i$ in $N$,
all strategies of $C_i^c$ jeopardize each other. Since $\alpha$ is full, this implies that in $\Gamma/\alpha$ there is a unique pure strategy profile $\{\sigma\}$, which is a Nash equilibrium of $\Gamma$ with support $\times_{i \in N} C_i^c$ (the argument is the same as in the proof of
proposition \ref{prop:zerosum2}). Moreover, since $\alpha$ is strong, it follows from the proof of lemma \ref{app-lm:sdv} that any pure strategy in $C_i\backslash C_i^c$ is not a best-response to $\sigma_{-i}$. Since $\sigma_i$ has support $C_i^c$, this implies that $\sigma$ is quasi-strict.
\end{appendix}

\end{document}